\DeclareRobustCommand\widecheck[1]{{\mathpalette\@widecheck{#1}}}
\def\@widecheck#1#2{%
    \setbox\z@\hbox{\m@th$#1#2$}%
    \setbox\tw@\hbox{\m@th$#1%
       \widehat{%
          \vrule\@width\z@\@height\ht\z@
          \vrule\@height\z@\@width\wd\z@}$}%
    \dp\tw@-\ht\z@
    \@tempdima\ht\z@ \advance\@tempdima2\ht\tw@ \divide\@tempdima\thr@@
    \setbox\tw@\hbox{%
       \raise\@tempdima\hbox{\scalebox{1}[-1]{\lower\@tempdima\box
\tw@}}}%
    {\ooalign{\box\tw@ \cr \box\z@}}}
\newcommand\reallywidehat[1]{%
\savestack{\tmpbox}{\stretchto{%
  \scaleto{%
    \scalerel*[\widthof{\ensuremath{#1}}]{\kern-.6pt\bigwedge\kern-.6pt}%
    {\rule[-\textheight/2]{1ex}{\textheight}}%WIDTH-LIMITED BIG WEDGE
  }{\textheight}%
}{0.5ex}}%
\stackon[1pt]{#1}{\tmpbox}%
}
\newcommand\reallywidecheck[1]{%
\savestack{\tmpbox}{\stretchto{%
  \scaleto{%
    \scalerel*[\widthof{\ensuremath{#1}}]{\kern-.6pt\bigvee\kern-.6pt}%
    {\rule[-\textheight/2]{1ex}{\textheight}}%WIDTH-LIMITED BIG WEDGE
  }{\textheight}%
}{0.5ex}}%
\stackon[1pt]{#1}{\tmpbox}%
}
\numberwithin{equation}{section}
\numberwithin{figure}{section}
\newcounter{dummy} \numberwithin{dummy}{section}
\newtheorem{thm}[dummy]{Theorem}
\newtheorem{lem}[dummy]{Lemma}
\newtheorem{prop}[dummy]{Proposition}
\newtheorem{cor}[dummy]{Corollary}
\newtheorem{defn}[dummy]{Definition}
\newtheorem{exmp}[dummy]{Example}
\newtheorem{rem}[dummy]{Remark}
\newcounter{assum}
\newenvironment{assum}{\refstepcounter{assum}\equation}{\tag{A\theassum}\endequation}
\newtheorem{assumm}[assum]{Assumption}
\newcounter{cque}
\date{\today}
\DeclareMathOperator*{\esssup}{ess\,sup}
\renewcommand{\k}{\kappa}
\newcommand{\R}{\mathbb{R}}
\newcommand{\Rc}{\widehat{\mathbb{R}}}
\newcommand{\Rcs}{\Rc_{sym}}
\newcommand{\Rs}{\mathbb{R}_{sym}}
\newcommand{\bR}{\mathbf{R}}
\newcommand{\bRc}{\widehat{\mathbf{R}}}
\newcommand{\bK}{\mathbf{K}}
\newcommand{\B}{\text{B}}
\newcommand{\iD}{\mathit{\Delta}}
\newcommand{\x}{\bm{x}}
\newcommand{\dx}{\tilde{x}}
\newcommand{\z}{\mathbf{z}}
\newcommand{\dt}{\tilde{t}}
\newcommand{\y}{\bm{y}}
\newcommand{\X}{\mathbf{X}}
\newcommand{\T}{\mathbf{T}}
\newcommand{\N}{\mathbb{N}}
\newcommand{\Z}{\mathbb{Z}}
\renewcommand{\L}{\mathfrak{L}}
\newcommand{\la}{\lambda}
\newcommand{\el}{e_\lambda}
\newcommand{\1}{\mathds{1}}
\newcommand{\wh}[1]{\widehat{#1}}
\newcommand{\wc}[1]{\widecheck{#1\,}}
\newcommand{\wwh}[1]{\reallywidehat{#1}}
\newcommand{\wwc}[1]{\reallywidecheck{#1}}
\newcommand{\vrt}[2]{\left. #1 \right\vert_{#2}}
\newcommand\restr[2]{{% we make the whole thing an ordinary symbol
  \left.\kern-\nulldelimiterspace % automatically resize the bar with \right
  #1 % the function
  \vphantom{\big|} % pretend it's a little taller at normal size
  \right|_{#2} % this is the delimiter
  }}
\let\copyint\int
\RenewDocumentCommand \int {o o}
{ \IfNoValueTF {#2} { \IfNoValueTF {#1} { \copyint } { \copyint\limits_{#1} } }	{ \copyint\limits_{#1}^{#2} } }
\NewDocumentCommand \E {o o}
{ \IfNoValueTF {#2} { 
											\IfNoValueTF {#1} {\mathbf{E}} 
																			 	{\mathbf{E}\big[#1\big]} 
										}
										{ 
											\mathbf{E}_{#1}\big[#2\big] 
										}
}
\RenewDocumentCommand \P {o o}
{ \IfNoValueTF {#2} { 
											\IfNoValueTF{#1} {\mathbf{P}} 
																			 {\mathbf{P}\big[#1\big]} 
										}
										{ 
											\mathbf{P}_{#1}\big[#2\big] 
										}
}
\NewDocumentCommand \df {m m o}
{ \frac{\partial^{#3} #1}{\partial #2^{#3}} }
\title{On stability of traveling wave solutions for integro--differential equations related to branching Markov processes.}
\author{Pasha Tkachov \thanks{Email: \texttt{pasha.tkachov@gssi.it}}}
\affil{Gran Sasso Science Institute, L'Aquila}
\date{\vspace{-1.5cm}}
\begin{document}
\maketitle

\begin{abstract}
The aim of this paper is to prove stability of traveling waves for integro-differential equations connected with branching Markov processes.
In other words, the limiting law of the left-most particle of a (time-continuous) branching Markov process with a L\'{e}vy non-branching part is demonstrated.
The key idea is to approximate the branching Markov process by a branching random walk and apply the result of A\"{i}d\'{e}kon \cite{Aid2013} on the limiting law of the latter one.
%Results by McKean \cite{McK1975} and Bramson \cite{Bra1983} for the branching Brownian motion are covered.
\end{abstract}

\section{Introduction}
The Fisher-KPP equation  and its analogues have been attracting growing attention over the last decade (see e.g. \cite{BBD2017,BD2015,CD2005,CDM2008,FKT2018i,HNRR2013,NRR2017} and references therein).
Common results in the PDE literature are existence and uniqueness of traveling waves for speed $c\geq c_*$, where $c_*$ is called the minimal speed of propagation (see references in \cite{FKT2018i}).
Studying of the traveling wave with the minimal speed is of primary interest \cite{ES2000}.
Although, its stability is well known for Fisher-KPP \cite{Bra1983,KPP1937,Lau1985,Uch1978},
to the best of our knowledge, it is absent for various analogues of the equation (see examples below).  
Thus, our goal is to prove stability of the traveling wave with the minimal speed to a large class of Fisher-KPP-type equations demonstrating universality of long-time behaviour in this class.

Ikeda, Nagasawa and Watanabe shown in \cite{INW1968i,INW1968ii,INW1969iii} that a class of branching Markov processes may be associated with non-linear evolution equations.
A process $\X$ from this class started from a one-point configuration may be described as follows: the point moves on the real line\footnote{Ikeda, Nagasawa and Watanabe considered a more general state space instead of $\R$.} as a Markov process $X^0$ up to the first branching time $\tau$, at which $X^0$ is terminated and new particles are instantly born and randomly distributed on the real line according to a stochastic kernel $\pi$.
The new-born particles repeat behavior of its parent: they move independently as $X^0$ and are terminated at independent random times distributed as $\tau$ producing new-born particles according to $\pi$. The procedure continues to infinity.
Such process $\X$ is called a $(X^0,\pi)$-branching Markov process (see Definition~\ref{defn:BMP}).
The configuration space of $\X$ is then $\bR=\bigcup\{\Rs^n\,\vert\, n\geq 0\}$, where $\Rs^n$ denotes the $n$-fold symmetric product of the real line $\R$.
Thus, $\X$ consists of $n\in\N$ points at time $t$ if $\X_t\in \Rs^n$, and $\X$ dies out if $\X_t\in \Rs^0:=\{\varnothing\}$, for some $t>0$.

Ikeda, Nagasawa and Watanabe proved (see Theorem~\ref{thm:BMP_and_PDE}), that the semigroup $\T$ of the $(X^0,\pi)$-branching Markov process $\X$ started from a one-point configuration $\{x\}$, $x\in\R$, is a minimal solution in the class of non-negative functions to the so-called S-equation. Namely, for $0\leq f \leq 1$ - Borel,
$t\geq0$, 
\[
	u(x,t) := \E[\{x\}][\prod_{y\in\X_t} f(y)] =  \vrt{(\T_t\wh{f}\,)}{\R} (x), \]
solves
\begin{equation}\label{eq:S}
	u(x,t) = T_t^0f(x) +  (\bK \wh{u})(x,t),
\end{equation}
where, $f$ is bounded Borel, $\wh{f}(\z) = \prod_{j=1}^n f(z_j)$,  $\wh{u}(\z,t) = \prod_{j=1}^n u(z_j,t)$, $\z\in\Rs^n$; for $X^0 = (X^0_t,\mathbb{P}_x^0)$, 
\begin{gather}
	T_t^0f(x) = \mathbb{E}_x^0[f(X^0_t), t<\tau],  \label{eq:semigroup_of_nonbranching_part} \\
	K(x;dt\,dy) = \mathbb{P}_x^0[\tau\in dt,\ X^0_{\tau-}\!\in dy],\label{eq:position_of_nonbranch_at_first_branch}\end{gather}
and for $g:\bR\times\R_+\to\R$, 
\begin{gather}
	(\bK g)(x,t) = \int[0][t] \int[\R] K(x;dsdy) \int[\bR] \pi(y,d\z) g(\z,t-s). \label{eq:K}
\end{gather}

Our goal is to prove that the minimal non-negative solution to the S-equation converges to a limiting profile.
This is equivalent to existence of the limiting law of the left-most particle of the corresponding branching Markov process $\X$.
We will apply the result by A\"{i}d\'{e}kon \cite{Aid2013} (c.f.  Theorem~\ref{thm:lim_of_leftmost_of_BRW}) on the limiting law of a branching random walk.
The idea is to consider the sampling $\{\X_{\frac{n}{2^k}}\}_{n\geq 0}$, $k\in\N$.
Since $\{\X_{\frac{n}{2^k}}\}_{n\geq 0}$ is a branching random walk, then the result of A\"{i}d\'{e}kon implies convergence of the left-most particle of $\X$ to a limiting profile  on subsets $\{t\in\frac{n}{2^k}\}_{n\in\N_0}$, $k\in\N$.
We will show that the limiting law is independent of $k$.
Finally, taking $k\to\infty$ we will obtain the statement for continuous time.

Together with the S-equation let us consider the following auxiliary linear equations, 
	\begin{align}
		v_\la(x,t) &= (T_t^0 \el)(x) + (\bK \wc{v\!}_\la)(x,t), \label{eq:for_v}\\ 
		w_{\la,\mu}(x,t) &= (T_t^0 e_{\la+\mu})(x) + \bK (\wc{w\!}_{\la,\mu} +\wc{v\!}_\la\wc{v\!}_\mu- \wwc{v_\la v_\mu}) (x,t), \label{eq:for_w}
	\end{align} 
	where $\la,\mu\in\R$, $\el(x):=e^{-\la x}$, $\wc{v}(\x,t) = \sum_{j=1}^n v(x_j,t)$, $\x\in\Rs^n$, $n\geq 0$.
	In Theorem~\ref{thm:f_fg} below we will show that the following functions are the minimal non-negative solutions  to \eqref{eq:for_v} and \eqref{eq:for_w} correspondingly,
		\begin{align}
			v_\la(x,t) &:= \E[\{x\}][\sum_{y\in\X_t} e^{-\la y}] = (\T_t \wc{e\!}_{\la}) (\{x\}), \label{eq:Laplace_tranform}\\	
			w_{\la,\mu}(x,t) &:= \E[\{x\}][\sum_{y\in\X_t} e^{-\la y}\sum_{y\in\X_t} e^{-\mu y}] = (\T_t \wc{e\!}_{\la}\wc{e\!}_{\mu}) (\{x\}). \label{eq:w_la_mu}
		\end{align}

		To formulate the main result of the article we need to impose additional assumptions on $\X$. 
First we exclude the possibility that the process $\X$ may explode in finite time, which is equivalent to the following assumption (see Lemma~\ref{lem:isnt_extinct_is_necessary}):
\begin{assumm}\label{assumm:doesnt_explode}
	For a $(X^0,\pi)$-branching Markov process $\X$, we assume that $u\equiv 1$ is the unique solution of the corresponding S-equation \eqref{eq:S} with the initial value $f\equiv1$.
\end{assumm}
Together with the process $\X$ we consider its sampling $(\X_n)$, $n\in\N_0$.
In view of \eqref{eq:Laplace_tranform}, the log-Laplace transform of a point process $\X_1$ is defined as follows 
\begin{equation}\label{eq:log-Laplace}
	\psi(\la) := \ln v_{\la}(0,1), \quad \la\in\R.
\end{equation}
With the following assumption we ensure that $(\X_n)$ survives with a positive probability and its left-most particle propagates linearly (asymptotically equivalent to $- n c_*$) as $n\to\infty$ on the set of non-extinction,
\begin{assum}\label{assum:supercritical}
	\psi(0)\in(0,\infty)\text{ and }\psi(\la)<\infty\text{ for some } \la>0.
\end{assum}
The speed of propagation $c_*$ is then defined as follows: under \eqref{assum:supercritical}, denote 
\begin{equation}\label{eq:la_0}
	\la_0:=\sup\{s>0: \psi(s)<\infty\}\in(0,\infty],
\end{equation}
and assume that there exists $\la_*\in(0,\la_0]$ satisfying
\begin{assum}\label{assum:c_*}
	\frac{\psi(\la_*)}{\la_*} = \inf_{\la>0} \frac{\psi(\la)}{\la} =: c_*.
\end{assum}
Let us also make the following technical restriction in order to ensure assumptions \cite[(1.1)--(1.4)]{Aid2013}, 
\begin{assum}\label{assum:sufficient_for_H1_and_H2}
	\begin{gathered}
		\la_* < \la_0,\qquad c_* = \frac{\partial}{\partial \la} \psi(\la_*),\\
		\exists\,\delta\in(0,\la_0 - \la_*):\ w_{0,0}(0,1)+w_{0,\la_*}(0,1) + w_{\delta,\la_*}(0,1)<\infty. 
	\end{gathered}
\end{assum}
We say that the distribution of $\X_1$ is \textit{non-lattice} if there do not exist $a>0$, $b\in\R$, such that $\P[\{0\}][\X_1\subset a\Z+b]=1$.
Now we can formulate the main result of the paper.
\begin{thm}\label{thm:main}
	Let $\X$ be a spatially homogeneous $(X_0,\pi)$-branching Markov process, which satisfies Assumption~\ref{assumm:doesnt_explode}, \eqref{assum:supercritical}, \eqref{assum:c_*}, \eqref{assum:sufficient_for_H1_and_H2}.
Suppose that the distribution of $\X_1$ is non-lattice. Then the following statements hold true:
\begin{enumerate}
	\item The left-most particle of $\X$, $M_t:= \min\{y\in\R: y\in\X_t\}$,
	satisfies, 
	\begin{equation}\label{eq:left-most_limiting_law}
  	\lim\limits_{t\to\infty} \P[\{0\}][M_t + c_*t - \frac{3}{2\la_*} \ln t + C \geq -x] = \phi(x), \quad x\in\R,
	\end{equation}
	where
	\begin{equation}\label{eq:traveling_wave}
		\phi(x) = \E[\{0\}][e^{-e^{-\la_* x}D_\infty}],
	\end{equation}
	and $D_\infty$ is the almost sure limit of the derivative martingale of $(\X_n)$ given by \eqref{eq:derivative_martingale}.
\item The minimal non-negative solution $u(x,t)$ to the S-equation \eqref{eq:S} with the initial condition  $u(x,0)=f(x) = \1_{\R_+}(x)$ satisfies,
	\begin{equation}\label{eq:u_and_M}
		u(x,t) = \P[\{x\}][M_t\geq 0] = \P[\{0\}][M_t \geq -x]. 
	\end{equation}
	\item For $u$ and $\phi$ as above the following asymptotic holds
	\begin{equation}\label{eq:stability}
		\lim\limits_{t\to\infty} u(x+c_*t - \frac{3}{2\la_*} \ln t + C, t) = \phi(x), \quad x\in\R.
	\end{equation}
	Moreover, $\phi(x{-}c_*t)$ is a monotone traveling wave solution to the S-equation, namely, $\phi$ is monotone, $(x,t)\to\phi(x{-}c_*t)$ solves \eqref{eq:S}, and 
	\[
		\lim_{x\to+\infty} \phi(x)=1, \quad \lim_{x\to-\infty} \phi(x)=\E[\{0\}][D_\infty=0]<1.
	\]
\end{enumerate}
\end{thm}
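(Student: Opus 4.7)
The plan is to establish (2) as a direct probabilistic identification, derive (1) from Aïdékon's theorem \cite{Aid2013} applied to dyadic time samplings of $\X$, and deduce (3) by combining them. For (2), specialize the probabilistic formula $u(x,t) = \E[\{x\}][\prod_{y\in\X_t} f(y)]$ preceding \eqref{eq:S} to $f = \1_{\R_+}$: then $\prod_{y\in\X_t}\1_{\R_+}(y) = \1_{\{M_t \geq 0\}}$, so $u(x,t) = \P[\{x\}][M_t \geq 0]$ is the minimal non-negative solution by Ikeda--Nagasawa--Watanabe, and spatial homogeneity yields \eqref{eq:u_and_M}.

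For (1), for each $k\in\N_0$ the sampling $(\X_{n/2^k})_{n\geq 0}$ is a branching random walk with log-Laplace transform $\la\mapsto \ln v_\la(0, 2^{-k})$. Under \eqref{assum:supercritical}--\eqref{assum:sufficient_for_H1_and_H2} it satisfies the hypotheses (1.1)--(1.4) of \cite{Aid2013} (the finiteness conditions on $w_{\cdot,\cdot}$ supply the second-moment information, with the scale-$2^{-k}$ analogues extracted from the recursion \eqref{eq:for_w}), and a linear time rescaling turns $c_*$ into $c_*/2^k$ while preserving $\la_*$. Aïdékon's theorem then produces centering constants $C_k$ and a.s. limits $D_\infty^{(k)}$ of the corresponding derivative martingales such that
\[
\lim_{n\to\infty}\P[\{0\}][M_{n/2^k} + c_*\tfrac{n}{2^k} - \tfrac{3}{2\la_*}\ln\tfrac{n}{2^k} + C_k \geq -x] = \E[\{0\}][e^{-e^{-\la_* x} D_\infty^{(k)}}].
\]
Because the scale-$2^{-(k+1)}$ grid refines the scale-$2^{-k}$ grid, evaluating both limits along common times $n/2^k$ forces the Laplace functionals of $D_\infty^{(k)}$ and $D_\infty^{(k+1)}$ to coincide, hence $C_k\equiv C$ and $D_\infty^{(k)}\equiv D_\infty$ independently of $k$. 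To pass to arbitrary $t$, sandwich $t$ between consecutive dyadic times and control the oscillation of $M_\cdot$ on intervals of length $2^{-k}$ via the Lévy path of $X^0$ and the branching kernel $\pi$; sending $n\to\infty$, then $k\to\infty$, then $\eps\to 0$ recovers \eqref{eq:left-most_limiting_law}.

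Stability \eqref{eq:stability} then follows directly from (1) and (2). For the traveling wave identity, a semigroup/recentering argument using the branching Markov property and the uniform-in-$x$ version of (1) shows that $(x,t)\mapsto\phi(x-c_*t)$ solves \eqref{eq:S}; monotonicity of $\phi$ is inherited from $x\mapsto\P[\{0\}][M_t\geq -x]$, and the limits at $\pm\infty$ follow from \eqref{eq:traveling_wave} by dominated convergence on $\{D_\infty = 0\}$ and $\{D_\infty > 0\}$, with the strict inequality $\P[\{0\}][D_\infty=0]<1$ coming from supercriticality \eqref{assum:supercritical}. The main obstacle is the continuous-time passage: $M_t$ is neither monotone nor continuous in $t$ (both the Lévy component of $X^0$ and branching events can produce jumps), so the oscillation estimate on intervals of length $2^{-k}$ must be uniform with respect to the nontrivial recentering $c_* t - \tfrac{3}{2\la_*}\ln t + C$ over arbitrarily long horizons, and one must also justify that the scale-$2^{-k}$ moment analogue of \eqref{assum:sufficient_for_H1_and_H2} holds uniformly in $k$ before invoking \cite{Aid2013} at each scale.
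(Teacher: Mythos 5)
Your overall architecture --- identifying $u(x,t)=\P[\{0\}][M_t\geq -x]$, sampling $\X$ along the dyadic grids $\{n/2^k\}$, applying A\"{i}d\'{e}kon's theorem at each scale, and matching the centering constants and the derivative-martingale limits across scales --- is the same as the paper's, and those parts are sound (the paper matches $D_\infty^{(k)}$ with $D_\infty$ pathwise via $D_{2^kn}(k)=D_n(1)$ rather than through Laplace functionals, and it obtains the small-time moment bounds needed for the hypotheses (1.1)--(1.4) at scale $2^{-k}$ from the submultiplicativity relations $v_\la(0,t-s)v_\la(0,s)\le v_\la(0,t)$ and $w_{\la,\mu}(0,s)v_{\la+\mu}(0,t-s)\le w_{\la,\mu}(0,t)$ of Lemma~\ref{lem:Laplace_is_time-multiplicative}, which is the precise content behind your ``extracted from the recursion'' remark).

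The genuine gap is the continuous-time passage, which you correctly identify as the main obstacle but then leave unresolved: you propose to control the oscillation of $t\mapsto M_t$ on intervals of length $2^{-k}$ via the L\'{e}vy path of $X^0$ and the branching kernel $\pi$, uniformly over arbitrarily long horizons. No such pathwise estimate is supplied, and it is not routine: at time $t$ there are exponentially many particles, the minimizer at time $t+s$ need not descend from the minimizer at time $t$, and under the sole exponential-moment hypothesis a particle far from the front can make a large leftward jump, so bounding $\sup_{s\le 2^{-k}}|M_{t+s}-M_t|$ with probability uniformly close to one would require a many-to-one/first-moment computation that your sketch does not contain. The paper avoids path regularity of $M$ altogether: it replaces $f=\1_{\R_+}$ by a uniformly continuous $f_h$ with $f\le f_h\le f(\cdot+h)$, invokes the comparison principle (Proposition~\ref{prop:comparison}) to sandwich $u\le u_h\le u(\cdot+h,\cdot)$, and uses Proposition~\ref{prop:u_is_BUC} (joint uniform continuity of $u_h$ in $(x,t)$) to upgrade the dyadic-time limit to all $t$, before letting $h\to 0_+$ and using continuity of $\phi$. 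If you want to keep your route you must actually prove the uniform oscillation bound; otherwise the analytic sandwich is the missing ingredient. The remaining items (monotonicity, the limits at $\pm\infty$ via \eqref{eq:derivative_martingale_is_positive}, and the traveling-wave identity by recentering the semigroup) match the paper.
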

\begin{cor}\label{cor:general_initial_condition}
If one consider a more general initial condition $u(x,0)=g(x)$, such that, for some $h>0$,
\[
	\1_{\R_+}(x) \leq g(x) \leq \1_{\R_+}(x+h), \quad x\in\R,
\]
then the comparison principle (see Proposition~\ref{prop:comparison}) immediately implies, that the corresponding solution $u_g(x,t) = \vrt{(\T_t\wh{g})}{\Rc}(x)$ to \eqref{eq:S} satisfies,
\[
	\phi(x) \leq \liminf_{t\to\infty} u_g(x+\theta(t),t) \leq \limsup_{t\to\infty} u_g(x+\theta(t),t) \leq \phi(x+h),
\]
with $\phi$ given by \eqref{eq:traveling_wave} and $\theta(t) = c_*t-\frac{3}{2\la_*}\ln t +C$.
\end{cor}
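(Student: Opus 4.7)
The plan is to \emph{sandwich} $u_g$ between two solutions to \eqref{eq:S} coming from Heaviside-type initial data and to pass to the limit via Theorem~\ref{thm:main}(3).

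First, introduce auxiliary solutions. Let $u_1(x,t)$ be the minimal non-negative solution of \eqref{eq:S} with initial datum $f_1(x)=\1_{\R_+}(x)$, and let $u_2(x,t)$ be the minimal non-negative solution with $f_2(x)=\1_{\R_+}(x+h)$. By spatial homogeneity of $\X$, the probabilistic representation $u(x,t) = \E[\{x\}][\prod_{y\in\X_t} f(y)]$ yields the identity $u_2(x,t)=u_1(x+h,t)$, i.e.\ shifting the initial profile to the left by $h$ amounts to shifting the solution to the left by $h$ in the spatial variable.

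Second, apply the comparison principle (Proposition~\ref{prop:comparison}) to the ordered $[0,1]$-valued initial data $f_1\leq g\leq f_2$. Since the log-Laplace functional $f\mapsto\prod_j f(y_j)$ is monotone in $f$ on $[0,1]$-valued inputs, this produces
\[
	u_1(x,t)\;\leq\; u_g(x,t)\;\leq\; u_2(x,t) \;=\; u_1(x+h,t), \qquad (x,t)\in\R\times\R_+.
\]

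Third, substitute $x\leadsto x+\theta(t)$ with $\theta(t)=c_*t-\tfrac{3}{2\la_*}\ln t+C$ and invoke Theorem~\ref{thm:main}(3) on both sides: $u_1(x+\theta(t),t)\to\phi(x)$ and $u_1(x+h+\theta(t),t)\to\phi(x+h)$ as $t\to\infty$. Taking the liminf on the left and the limsup on the right of the inequality for $u_g(x+\theta(t),t)$ yields the claimed chain of bounds. There is essentially no serious obstacle; the only point worth a remark is that the monotonicity direction of Proposition~\ref{prop:comparison} matches the one needed here, which is automatic because the initial data take values in $\{0,1\}$ and both bounds lie in the admissible class for the main theorem.
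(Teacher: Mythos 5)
Your proof is correct and matches the paper's intended argument: the paper offers no separate proof beyond the remark that Proposition~\ref{prop:comparison} ``immediately implies'' the claim, and your sandwich $u_1\leq u_g\leq u_1(\cdot+h,\cdot)$ combined with the limit \eqref{eq:stability} is precisely that argument, with the spatial-homogeneity identity $u_2(x,t)=u_1(x+h,t)$ correctly supplied.
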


\section{Discussion}

The relation \eqref{eq:u_and_M} was first shown by McKean \cite{McK1975} for the branching Brownian motion and the Fisher--KPP equation (see Example~\ref{exmp:X3+P2}).
For the branching Brownian motion the limits \eqref{eq:left-most_limiting_law} and \eqref{eq:stability} are known since Uchiyama \cite{Uch1978}, Bramson \cite{Bra1983} and Lau \cite{Lau1985}.

Theorem~\ref{thm:main} shows, that the limiting behaviour \eqref{eq:stability} holds true for a large class of equations, where general reaction terms are possible due to the large set of branching laws $\pi$ of the underlying branching Markov process, and more general than the Brownian motion propagation mechanisms may be considered due to the L\'{e}vy nature of the non-branching part $X^0$.
Namely, by \cite[Theorem~10.5]{Sat1999} and \eqref{eq:non-branching_part} below, the process $X^0$ is a non-branching part of a spatially homogeneous $(X^0,\pi)$-branching Markov process if and only if there exists a L\'{e}vy process $X$ on $\R$, such that $X^0_t = X_t$, $t<\tau$ (here $\tau$ is the first branching time).

Chen \cite{Che2015} proved that in the case of a branching random walk assumptions \eqref{assum:H1} and \eqref{assum:H2} below are necessary and sufficient for the limiting profile \eqref{eq:traveling_wave} to be non-constant.
In our approach it is difficult to check this assumption uniformly with respect to the sampling parameter $k\in\N$ (i.e., for all $\X_{2^{-k}}$, $k\in\N$).
Therefore, \eqref{assum:sufficient_for_H1_and_H2} and $w_{\la,\mu}$ were introduced.
As one can see in the proof of Proposition~\ref{prop:nl+logistic} given in the appendix, it is easy to check \eqref{assum:sufficient_for_H1_and_H2} for particular examples.

The assumption $\la_*<\la_0$ in \eqref{assum:sufficient_for_H1_and_H2} excludes the edge case $\la_*=\la_0$ (note that $\la_*\leq\la_0$ by definition).
In the case of Example~\ref{exmp:X2+P3}, it was shown in \cite{FKT2018ii} that the traveling wave \eqref{eq:traveling_wave} with the minimal speed can have different asymptotic behaviour depending on the choice of $X^0$.
Namely, additionally to the expected asymptotic behaviour $\phi(x)\sim x e^{-\la_*x}$, it is also possible to have $\phi(x) \sim e^{-\la_*x}$. 
Ebert and Saarloos \cite{ES2000} heuristically shown that different asymptotic behaviour of $\phi$ may lead in \eqref{eq:stability} to a correction term different of $\frac{3}{2\la_*}\ln t$.
Therefore, our hypothesis is that if $\la_*=\la_0$, then correction terms different from the one in \eqref{eq:stability} are possible. 

In the case of the branching Brownian motion the representation of the traveling wave \eqref{eq:traveling_wave} is well known, see Lalley and Sellke \cite{LS1987}. Also see \cite{ABBS2013} and references therein for more delicate results.
For the idea of how to cover more general initial conditions to \eqref{eq:S} we refer to \cite{BD2011}.
For a shorter proof of \cite[Theorem~1.1]{Aid2013} see \cite{BDZ2016}.
For resent analytic results on stability of traveling waves in the Fisher-KPP equation we refer to \cite{HNRR2013,NRR2017} and references therein.
How different may be correction terms in \eqref{eq:stability} for Fisher-KPP-type equations see in \cite{BBD2017}.

\section{Related PDEs}
In order to pass from the S-equation \eqref{eq:S} to a partial differential equation (PDE) one need to differentiate both sides of \eqref{eq:S} with respect to the time variable.
This is equivalent to the definition of a generator of the underlying $(X^0,\pi)$-branching Markov process $\X$, that requires additional regularity assumptions on the process $\X$.
We prefer to avoid introduction of such assumptions here as this would increase technicality of the article.
Nevertheless, we will show now heuristically, which PDEs correspond to \eqref{eq:S}, \eqref{eq:for_v} and \eqref{eq:for_w}, that is straightforward to prove rigorously for examples given below.
For the general rigorous approach we refer the reader to the notion of H-regularity in \cite{INW1969iii}.

If $u$ satisfies the S-equation \eqref{eq:S} we will call it a mild solution. Let us call it a strong solution if it satisfies the following PDE, 
\begin{equation}\label{eq:S_equation_PDE}
	\frac{\partial u}{\partial t}(x,t) = (A^0u)(x,t) + k\int[\bR] \pi(x,d\z) \wh{u}(\z,t),
\end{equation}
where $A^0$ is the generator of $X^0$, $k:=\frac{d \mathbb{P}^0_x(\tau\in dt)}{dt}(0)$ - value of the probability density of the branching time $\tau$ at $0$, $u(x,0)=f(x)$, $x\in\R$.
The Laplace transform $v_\la$ of $\X$ defined by \eqref{eq:Laplace_tranform} satisfies \eqref{eq:for_v}, hence,
\begin{equation}\label{eq:laplace_transform_PDE}
	\frac{\partial v_\la}{\partial t}(x,t) = (A^0v_\la)(x,t) + k\int[\bR] \pi(x,d\z) \wc{v\!}_\la(\z,t),
\end{equation}
with $v_\la(x,0)=e^{-\la x}$, $x\in\R$.
Similarly, the function $w_{\la,\mu}$ defined by \eqref{eq:w_la_mu} satisfies \eqref{eq:for_w}, therefore, 
\begin{align}
	\frac{\partial w_{\la,\mu}}{\partial t}(x,t) &= (A^0w_{\la,\mu})(x,t) \nonumber \\
		&\quad + k\int[\bR] \pi(x,d\z) (\wc{w\!}_{\la,\mu}(\z,t)+ (\wc{v\!}_\la \wc{v\!}_\mu - \wc{v_\la v_\mu})(\z,t)). \label{eq:w_la_mu_PDE}
\end{align}

\section{Examples}
\textbf{I. Non-branching parts.}
\begin{enumerate}[label=(\textnormal{X\arabic*}), wide]
	\item \label{item:X1} 
		\textit{Constant.} Let the non-branching part $X^0$ be trivial: the point does not move and dies with a random exponentially distributed time with rate $1$.
Thus, for $f\in\B(\R)$, the generator $T^0$ of the process $X^0$ has the following form,	
	\begin{align*}
		T_t^0f(x) &= \mathbb{E}_x^0[f(X^0_t), \tau>t] = e^{-t} f(x).
	\end{align*}
\item \label{item:X2} 
\textit{Pure-jump process.} Let the non-branching part $X^0$ of a branching Markov process $\X$ be the pure-jump Markov process with a bounded jump-kernel $a\in L^1(\R{\to}\R_+)$ and the jump rate $1$.
Namely, starting from a point $x\in\R$, the process $X^0$ waits a random exponentially distributed time with rate $1$, and, then, it jumps from $x$ to a point $y\in\R$ with probability $a(y-x)dy$.
Next, we suppose, that the branching time $\tau$ is exponentially distributed with rate $1$.
At time $\tau$ the particle $X^0_{\tau-}$ dies. 
Thus, for $f\in\B(\R)$, the generator $T^0$ of the process $X^0$ has the following form,	
	\begin{align*}
		T_t^0f(x) &= \mathbb{E}_x^0[f(X^0_t), \tau>t] = e^{-t} \int[\R]p(y-x,t)f(y)dy,
	\end{align*}
	where 
	\begin{gather}
		p(y-x,t)dy = e^{-t}\big( \delta_x(dy) + \sum\limits_{n\in\N} \frac{t^n}{n!} a^{*n}(y-x)dy\big), \label{eq:jump_transition_density} \\ 
		a^{*n}(y)  = (a*a^{*(n-1)})(y), \quad a^{*2}(y) = \int[\R] a(y-z)a(z)dz, \quad a^{*1}(y)=a(y). \nonumber 
	\end{gather}

\item \label{item:X3} \textit{Standards Brownian motion.} Let the non-branching part $X^0$ be a standard Brownian motion up to the branching time $\tau$,
	which is exponentially distributed with rate $1$.
	Then, for $f\in\B(\R)$, the generator $T^0$ of the process $X^0$ has the following form,	
	\begin{align*}
		T_t^0f(x) &= \mathbb{E}_x^0[f(X^0_t), \tau>t] = e^{-t} \int[\R]p(y-x,t)f(y)dy,
	\end{align*}
	where 
	\[
		p(x,t) = \frac{1}{\sqrt{2\pi t}} e^{-\frac{x^2}{2t}}.
	\]
\end{enumerate}
\textbf{II. Branching laws.} 
\begin{enumerate}[label=(\textnormal{P\arabic*}), wide]
	\item \label{item:P1}
	Let a particle at the moment of its death gives birth to two children which are positioned at the same point, where the parent dies. Then the branching law $\pi$ has the following form,
	\begin{align*}
		\pi(y,d\z) &= \1_{\Rs^2}(\z) \delta_{y}(dz_1)\delta_{y}(dz_2).
	\end{align*}

	\item \label{item:P2}
		We consider the following generalization of the branching law \ref{item:P1}. We assume that a particle gives birth to $n$ children with a probability $p_n$, and children are placed at the same point where the parent dies.
	\[
		\pi(y,d\z) = \sum\limits_{n\in\N} p_n \1_{\Rs^n}(y) \prod_{j=1}^n \delta_y(d z_j), \quad p_n\in[0,1], \quad \sum_{n\in\N} p_n \leq 1.
	\]

	\item \label{item:P3}
	Let a particle at the moment of its death gives birth to two children, one of which is positioned at the same point where the parent dies, and the second one is placed randomly at $z\in\R$ with a probability $b(z-X^0_{\tau-})dz$, where $b$ is a bounded probability density. Hence, the branching law is defined as follows,
	\begin{align*}
		\pi(y,d\z) = \1_{\Rs^2}(\z) \delta_{y}(dz_1) b(z_2-y)dz_2.
	\end{align*}
\end{enumerate}

\textbf{III. $(X^0,\pi)$-branching Markov processes}

\begin{exmp}[X1+P2]\label{exmp:X1+P2}
	Let the non-branching part $X^0$ of a $(X^0,\pi)$-branching Markov process $\X$ be defined by \ref{item:X1} and its branching law $\pi$ by \ref{item:P2}.
Then $\X$ is the Galton-Watson process. The corresponding S-equation is then,
\[
	\partial_t u(t) = -u(t) + (Fu)(t),
\]
where we omit the trivial dependence on $x\in\R$.
This example is standard, so we skip farther details and refer to \cite{AN1972}.
\end{exmp}

\begin{exmp}[X1+P3]\label{exmp:X1+P3}
	Let the $(X^0,\pi)$-branching Markov process $\X$ be defined by \ref{item:X1} and \ref{item:P3}. Then the S-equation reads as follows, 
\[
	\partial_t u(x,t) = -u(x,t) + u(x,t) (b*u)(x,t).
\]
Such equation first appeared in \cite{Mol1972,Mol1972a} (apply the change of variables $u\to1-u$, $b(x)\to b(-x)$). Existence of traveling waves was proven in \cite{Mol1972a}. To the best of our knowledge, stability of the traveling wave with the minimal speed given by Theorem~\ref{thm:main} was absent in the literature.
The Laplace transform $v_\la(x,t)$ of $\X$, defined by \eqref{eq:Laplace_tranform} satisfies \eqref{eq:for_v}, which reads now as follows,
  \[
  	\partial_t v_\la(x,t) = (b*v_\la)(x,t),\quad v_\la(x,0)=e^{ -\la x}, \quad x\in\R,\ t>0.
  \]
  Therefore, the log-Laplace transform of $\X_1$ equals   
  \[
  	\psi(\la) = \ln v_\la(0,1) = (\L b)(\la), \qquad (\L b)(\la) := \int[\R] b(x) e^{ - \la x} dx. 
  \]

\begin{prop}
 	Let there exist $l,\delta,\la>0$ such that,
	\begin{equation}
		I:=\inf_{y\in(-l-\delta,-l)} b(y) >0, \qquad (\L b)(\la)<\infty.
	\end{equation}
	Suppose $\la_0$ be defined by \eqref{eq:la_0} and infimum in \eqref{assum:c_*} is not attained at $\la_*=\la_0$.
	Then the $(X_0,\tau)$-branching Markov process $\X$ given by \ref{item:X1} and \ref{item:P3} satisfies conditions of Theorem~\ref{thm:main}.
\end{prop}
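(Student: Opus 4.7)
My plan is to verify one by one the five hypotheses needed in Theorem~\ref{thm:main}. In the \ref{item:X1}+\ref{item:P3} setting each particle lives an independent $\mathrm{Exp}(1)$ time and then splits into two, so $|\X_t|$ is a Yule process and does not explode; this gives Assumption~\ref{assumm:doesnt_explode}. Solving \eqref{eq:for_v} with the non-negative ansatz $v_\la(x,t)=e^{-\la x+t(\L b)(\la)}$ — it is locally bounded and matches the initial datum, hence agrees with the probabilistic $v_\la$ by minimality — yields $\psi(\la)=(\L b)(\la)$. Thus $\psi(0)=1\in(0,\infty)$ and the assumption $(\L b)(\la)<\infty$ supplies a positive $\la$ with $\psi(\la)<\infty$, giving \eqref{assum:supercritical}. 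Moreover $\psi$ is convex and $C^\infty$ on $(0,\la_0)$.

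For \eqref{assum:c_*} and the equality part of \eqref{assum:sufficient_for_H1_and_H2}: since $\psi(0^+)=1$, $\psi(\la)/\la\to+\infty$ as $\la\to0^+$; together with the standing hypothesis that the infimum in \eqref{assum:c_*} is not attained at $\la_0$, this forces an interior minimizer $\la_*\in(0,\la_0)$, whence $\la_*<\la_0$. The first-order condition $\frac{d}{d\la}\big(\psi(\la)/\la\big)\big|_{\la_*}=0$ rearranges to $\psi'(\la_*)=\psi(\la_*)/\la_*=c_*$, which is the remaining identity in \eqref{assum:sufficient_for_H1_and_H2}.

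The main computational step is the finiteness of the three $w$-quantities in \eqref{assum:sufficient_for_H1_and_H2}. I would substitute the ansatz $w_{\la,\mu}(x,t)=e^{-(\la+\mu)x}W(t)$ into \eqref{eq:for_w}. Using the closed form of $v_\la$ and translation invariance of $b$, the $\pi$-integrals evaluate explicitly: $\int\pi(x,d\z)[\wc{v\!}_\la\wc{v\!}_\mu-\wc{v_\la v_\mu}](\z,t)=(\psi(\la)+\psi(\mu))v_\la(x,t)v_\mu(x,t)$ and $\int\pi(x,d\z)\wc{w\!}_{\la,\mu}(\z,t)=w_{\la,\mu}(x,t)+\psi(\la+\mu)W(t)e^{-(\la+\mu)x}$. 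Because $A^0=-I$ for \ref{item:X1}, the self-contribution $-w_{\la,\mu}$ cancels the $w_{\la,\mu}(x,t)$ term above, and the equation collapses to the scalar linear ODE
\[
W'(t)=\psi(\la+\mu)\,W(t)+(\psi(\la)+\psi(\mu))\,e^{t(\psi(\la)+\psi(\mu))},\qquad W(0)=1.
\]
Hence $w_{\la,\mu}(0,1)=W(1)<\infty$ whenever $\psi(\la),\psi(\mu),\psi(\la+\mu)$ are all finite, which covers $(\la,\mu)\in\{(0,0),(0,\la_*),(\delta,\la_*)\}$ for any $\delta\in(0,\la_0-\la_*)$. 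Finally, $\X_1$ is non-lattice: with positive probability the root branches exactly once during $[0,1]$, producing $\X_1=\{0,Y\}$ with $Y$ absolutely continuous on $\R$, so $\P[\{0\}][\X_1\subset a\Z+b]<1$ for every $a>0$ and $b\in\R$. The only delicate link is the identification of the probabilistic $w_{\la,\mu}$ of \eqref{eq:w_la_mu} with the ansatz solution of the ODE above; this will rest on the minimality part of Theorem~\ref{thm:f_fg} together with the non-negativity and local boundedness of the ansatz on $\R\times[0,1]$.
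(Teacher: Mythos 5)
Your overall route coincides with the paper's: the paper omits this proof precisely because it is the same argument as the appendix proof of Proposition~\ref{prop:nl+logistic}, namely (i) spatial homogeneity, Assumption~\ref{assumm:doesnt_explode} and the non-lattice property ``by construction'', (ii) $\psi=(\L b)$ from the linear equation for $v_\la$, (iii) convexity of $\psi(\la)/\la$ plus blow-up at both ends of $(0,\la_0)$ to get an interior minimizer and $c_*=\psi'(\la_*)$, and (iv) an explicit Duhamel formula for $w_{\la,\mu}$ giving the finiteness required in \eqref{assum:sufficient_for_H1_and_H2}. Your scalar ODE for $W$ is exactly that Duhamel formula in differential form, your evaluation of the $\pi$-integrals is correct for \ref{item:P3}, and your closing remark about identifying the probabilistic $w_{\la,\mu}$ with the ansatz via the minimality in Theorem~\ref{thm:f_fg} is the right (and here unproblematic, since $T^0_tf=e^{-t}f$) glue.

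The one genuine gap is that you never use the hypothesis $I=\inf_{y\in(-l-\delta,-l)}b(y)>0$, and as a consequence you do not actually establish that the infimum in \eqref{assum:c_*} is \emph{attained}. The inference ``$\psi(\la)/\la\to\infty$ as $\la\to0^+$, plus non-attainment at $\la_0$, forces an interior minimizer'' breaks down when $\la_0=\infty$: for instance, if $b$ is supported in $[0,\infty)$ then $(\L b)(\la)\le 1$ for all $\la>0$, so $\psi(\la)/\la$ decreases to $0$ and the infimum is attained nowhere; the literal hypothesis ``not attained at $\la_*=\la_0$'' does not exclude this. The condition $I>0$ is there precisely to rule it out: it gives $(\L b)(\la)\ge \delta I e^{\la l}$, hence $\psi(\la)/\la\to\infty$ as $\la\to\infty$, and this coercivity, combined with the blow-up at $0^+$, the convexity (hence continuity) of $\psi(\la)/\la$ on $(0,\la_0)$, and the exclusion of a minimizer at $\la_0$, yields $\la_*\in(0,\la_0)$. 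Once that is inserted, your first-order condition argument for $c_*=\psi'(\la_*)$ and the rest of the proof go through.
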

We omit the proof of the proposition, since it is similar to the one of Proposition~\ref{prop:nl+logistic} below, which we prove in the appendix. 
\end{exmp}

\begin{exmp}[X2+P1]\label{exmp:X2+P1}
	Let the $(X^0,\pi)$-branching Markov process $\X$ be defined by \ref{item:X2} and \ref{item:P1}. 
	Then, the corresponding S-equation \eqref{eq:S} has the following form, 
  \[
  	\partial_t u(x,t)	 = (a*u)(x,t) - 2u(x,t) + u^2(x,t).
  \]
	Such equation is a special case of the equation considered, for example, in \cite{CD2005,CDM2008}, where existence of traveling waves and formula for the speed \eqref{assum:c_*} were proven.
	To the best of our knowledge, stability of the traveling wave with the minimal speed, which is exactly the one given in Theorem~\ref{thm:main}, was not shown before. 
The idea to consider jumps as a mechanism of propagation was suggested already in the seminal paper by Kolmogorov, Petrovskii and Piskunov \cite{KPP1937}, where the authors approximated $a*u-u$ by the Laplace operator.
The idea to consider the logistic reaction term $u-u^2$ is usually referred to Fisher \cite{Fis1937}.
 
  The Laplace transform $v_\la(x,t)$ of $\X$, defined by \eqref{eq:Laplace_tranform} satisfies \eqref{eq:for_v}, which reads now as follows,
  \[
  	\partial_t v_\la(x,t) = (a*v_\la)(x,t),\quad v_\la(x,0)=e^{ -\la x}, \quad x\in\R,\ t>0.
  \]
  Therefore, the log-Laplace transform of $\X_1$ equals   
  \[
  	\psi(\la) = \ln v_\la(0,1) = (\L a)(\la), \qquad (\L a)(\la) := \int[\R] a(x) e^{ - \la x} dx. 
  \]
	The function $w_{\la,\mu}$ given by \eqref{eq:w_la_mu} satisfies \eqref{eq:for_w}. Hence, $w_{\la,\mu}(x,0) = e^{-(\la+\mu)x}$, and 
	\[
		\partial_t w_{\la,\mu}(x,t) = (a*w_{\la,\mu})(x,t) + 2v_\la(x,t)v_\mu(x,t).
	\]

\begin{prop}\label{prop:nl+logistic}
 	Let there exist $l,\delta,\la>0$ such that,
	\begin{equation}\label{eq:safficient_for_pure_jump_logistic}
		I:=\inf_{y\in(-l-\delta,-l)} a(y) >0, \qquad (\L a)(\la)<\infty.
	\end{equation}
	Suppose $\la_0$ be defined by \eqref{eq:la_0} and infimum in \eqref{assum:c_*} is not attained at $\la_*=\la_0$.
	Then the $(X_0,\tau)$-branching Markov process $\X$ given by \ref{item:X2} and \ref{item:P1} satisfies conditions of Theorem~\ref{thm:main}.
\end{prop}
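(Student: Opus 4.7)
I need to verify, for the branching Markov process of Example~\ref{exmp:X2+P1}, the five conditions entering Theorem~\ref{thm:main}: Assumption~\ref{assumm:doesnt_explode}, \eqref{assum:supercritical}, \eqref{assum:c_*}, \eqref{assum:sufficient_for_H1_and_H2}, and that $\X_1$ has non-lattice distribution. The example is so explicit that every condition reduces to an elementary identity involving $\psi(\la)=(\L a)(\la)$. I would first dispatch the easy pieces. The population $(\#\X_t)$ is a pure-birth process with per-particle branching rate one, so $\E[\#\X_t]=e^{t}<\infty$, yielding Assumption~\ref{assumm:doesnt_explode}. Since $a$ is a probability density, $\psi(0)=(\L a)(0)=1\in(0,\infty)$, and finiteness of $\psi(\la)$ at some $\la>0$ is exactly the second half of \eqref{eq:safficient_for_pure_jump_logistic}; hence \eqref{assum:supercritical} holds. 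The non-lattice property is also immediate: with positive probability the underlying jump process makes at least one jump and no branching occurs in $[0,1]$, so $\X_1$ has an absolutely continuous component and cannot be contained in any $a\Z+b$.

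The central step is to produce $\la_*\in(0,\la_0)$ achieving $c_*=\inf_{\la>0}\psi(\la)/\la$. Using the lower bound on $a$ from \eqref{eq:safficient_for_pure_jump_logistic} I would estimate
\[
\psi(\la) \;\ge\; \int_{-l-\delta}^{-l} a(y) e^{-\la y}\,dy \;\ge\; I\delta\,e^{\la l}, \qquad \la>0,
\]
from which $\psi(\la)/\la\to\infty$ as $\la\uparrow\la_0$ in the cases $\la_0=\infty$ or $\la_0<\infty$ with $\psi(\la_0^-)=\infty$; in the remaining case $\la_0<\infty$ with $\psi(\la_0)<\infty$, the hypothesis directly forbids $\la_0$ as a minimiser. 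Combined with $\psi(\la)/\la\to\infty$ as $\la\to 0^+$ (because $\psi(0)=1>0$), convexity and continuity of $\psi$ on $(0,\la_0)$ yield attainment of the infimum at some $\la_*\in(0,\la_0)$; this gives \eqref{assum:c_*} and the first clause of \eqref{assum:sufficient_for_H1_and_H2}. The interior first-order condition $(\psi/\la)'(\la_*)=0$ then rearranges to $c_*=\psi'(\la_*)$.

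The remaining piece of \eqref{assum:sufficient_for_H1_and_H2} is the finiteness of the three quantities $w_{0,0}(0,1)$, $w_{0,\la_*}(0,1)$, $w_{\delta,\la_*}(0,1)$. Spatial homogeneity and the branching semigroup property give $v_\la(x,t)=e^{-\la x+t\psi(\la)}$. Substituting the ansatz $w_{\la,\mu}(x,t)=e^{-(\la+\mu)x}g_{\la,\mu}(t)$ into \eqref{eq:w_la_mu_PDE} collapses the PDE to the linear ODE
\[
g_{\la,\mu}'(t) \;=\; \psi(\la+\mu)\,g_{\la,\mu}(t) + 2 e^{t(\psi(\la)+\psi(\mu))}, \qquad g_{\la,\mu}(0)=1,
\]
whose value at $t=1$ is a closed-form expression in $\psi(\la),\psi(\mu),\psi(\la+\mu)$ (with the degenerate case $\psi(\la+\mu)=\psi(\la)+\psi(\mu)$ handled by the usual integrating-factor formula). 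For the three pairs $(\la,\mu)\in\{(0,0),(0,\la_*),(\delta,\la_*)\}$ with any $\delta\in(0,\la_0-\la_*)$, all of $\psi(0),\psi(\la_*),\psi(\delta),\psi(\delta+\la_*)$ are finite, so the required bounds follow. The only step demanding real care is the coercivity argument that places $\la_*$ in the open interval $(0,\la_0)$: the tail lower bound encoded by $I>0$ is precisely what prevents the minimiser from escaping to $+\infty$ when $\la_0=\infty$, and the hypothesis on $\la_0$ dispatches the boundary case; once these are in place, every remaining verification is a one-line explicit computation.
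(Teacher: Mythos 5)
Your proof is correct and follows essentially the same route as the paper's: the lower bound $\psi(\la)\ge I\delta e^{\la l}$ for coercivity of $\psi(\la)/\la$ at both ends of $(0,\la_0)$, convexity/continuity plus the hypothesis on $\la_0$ for attainment at an interior $\la_*$, the first-order condition for $c_*=\psi'(\la_*)$, and the explicit integrating-factor solution of the linear equation for $w_{\la,\mu}$ to verify \eqref{assum:sufficient_for_H1_and_H2}. The only difference is that you spell out the non-explosion and non-lattice verifications that the paper dismisses with ``by construction,'' which is harmless.
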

Proposition~\ref{prop:nl+logistic} is proved in the appendix.
\end{exmp}

\begin{exmp}[X2+P2]
	For the $(X_0,\pi)$-branching Markov process $\X$ defined by \ref{item:X2} and \ref{item:P2}, the S-equation \eqref{eq:S} reads as follows,
  \[
		\partial_t u(x,t)	 = (a*u)(x,t) - 2 u(x,t) + F(u(x,t)), \quad F(x) = \sum\limits_{j\in\N} p_j x^j.
  \]
	If $p_0=0$, then $F(0)=0$.
If $p_1<1$, then $F'(1) = \sum_{n\geq1} np_n>1$.
As a result, after the change $u\to1-u$ we arrive to the Fisher-KPP-type equation with the KPP reaction term.
	We refer again to \cite{CD2005,CDM2008}.

	The Laplace transform $v_\la(x,t)$ of $\X$ satisfies,
  \[
		\partial_t v_\la(x,t) = (a*v_\la)(x,t) - 2v_\la(x,t) + (\sum_{n\geq1} np_n) v_\la(x,t),\quad v_\la(x,0)=e^{ -\la x}.
  \]
  Therefore, the log-Laplace transform of $\X_1$ equals   
  \[
		\psi(\la) = \ln v_\la(0,1) = (\L a)(\la) - 2 + \sum_{n\geq1} np_n.
  \]
	Proposition~\ref{prop:nl+logistic} holds true in this case too.
\end{exmp}

\begin{exmp}[X2+P3]\label{exmp:X2+P3} 
	The $(X_0,\pi)$-branching Markov process $\X$ defined by \ref{item:X2} and \ref{item:P3} corresponds to the following  S-equation \eqref{eq:S},
  \[
  	\partial_t u(x,t)	 = (a*u)(x,t) - 2 u(x,t) + u(x,t)(b*u)(x,t).
  \]
	This equation appears as a scaling limit of a birth-death point process in the so-called Bolker-Pacala model \cite{BP1997} (consider $u\to1-u$, $a(x)\to a(-x)$, $b(x)\to b(-x)$).
	See \cite{FM2004}, \cite{FKK2011} for the rigorous derivation, and \cite{FKT2018i,FKT2018ii} for existence and uniqueness of traveling waves as well as the derivation of the formula \eqref{assum:c_*} of the minimal speed. 

	The Laplace transform $v_\la(x,t)$ of $\X$ satisfies,
  \[
  	\partial_t v_\la(x,t) = ((a+b)*v_\la)(x,t) - v_\la(x,t),\quad v_\la(x,0)=e^{ -\la x}.
  \]
  Therefore, the log-Laplace transform of $\X_1$ equals   
  \[
  	\psi(\la) = \ln v_\la(0,1) = (\L a)(\la) + (\L b)(\la) - 1.
  \]
	\begin{prop}\label{prop:nl+nl_logistic}
 		Let there exist $l,\delta,\la>0$ such that,
		\begin{equation*}
			I:=\inf_{y\in(-l-\delta,-l)} a(y)+b(y) >0, \qquad (\L a)(\la) + (\L b)(\la)<\infty.
		\end{equation*}
		Suppose $\la_0$ be defined by \eqref{eq:la_0} and infimum in \eqref{assum:c_*} is not attained at $\la_*=\la_0$.
		Then the $(X_0,\tau)$-branching Markov process $\X$ given by \ref{item:X2} and \ref{item:P3} satisfies conditions of Theorem~\ref{thm:main}.
	\end{prop}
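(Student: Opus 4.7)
The plan is to verify one by one each hypothesis of Theorem~\ref{thm:main}, largely following the strategy of Proposition~\ref{prop:nl+logistic} (proved in the appendix) but replacing the kernel $a$ by the combined kernel $a+b$ in the formulas for $\psi$, and keeping track of the extra cross-term generated by the branching law~\ref{item:P3} in the equation for $w_{\lambda,\mu}$.

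First I would dispatch the cheap hypotheses. Non-explosion (Assumption~\ref{assumm:doesnt_explode}) holds because law~\ref{item:P3} produces exactly two offspring at each branching event and each particle branches at rate $1$, so $|\X_t|$ is dominated by a Yule process and is finite a.s.\ for every $t\geq 0$. Supercriticality \eqref{assum:supercritical} follows from
\[
	\psi(0) = (\L a)(0) + (\L b)(0) - 1 = 1 \in (0,\infty),
\]
together with $\psi(\lambda) = (\L a)(\lambda) + (\L b)(\lambda) - 1 < \infty$ at the $\lambda > 0$ supplied by the hypothesis. The non-lattice property of $\X_1$ comes from the condition $I>0$: exactly as in Proposition~\ref{prop:nl+logistic}, strict positivity of $a+b$ on an interval gives the one-step offspring distribution an absolutely continuous component on a non-degenerate interval, which precludes any lattice support.

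Next I would analyze $\psi$. The Laplace transforms $(\L a)$ and $(\L b)$ are convex and real-analytic on $(-\infty,\lambda_0)$, so the same is true of $\psi$. Combined with the standing hypothesis that the infimum in \eqref{assum:c_*} is not attained at $\lambda_0$, strict convexity produces an interior minimizer $\lambda_* \in (0,\lambda_0)$, which yields both \eqref{assum:c_*} and the first inequality of \eqref{assum:sufficient_for_H1_and_H2}; the identity $c_* = \psi'(\lambda_*)$ is the first-order condition $\tfrac{d}{d\lambda}(\psi(\lambda)/\lambda)|_{\lambda_*} = 0$.

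The step I expect to be the main obstacle is the finiteness of $w_{0,0}(0,1)$, $w_{0,\lambda_*}(0,1)$ and $w_{\delta,\lambda_*}(0,1)$ in \eqref{assum:sufficient_for_H1_and_H2}. Starting from the PDE for $w_{\lambda,\mu}$ derived in Example~\ref{exmp:X2+P3}, Duhamel's formula relative to the mean semigroup with generator $(a+b)*\,\cdot - \,\cdot$ gives a non-negative representation of $w_{\lambda,\mu}$ as an initial term plus an integrated source involving $v_\lambda v_\mu$ (with a factor $2$ accounting for the two children in~\ref{item:P3}). Using spatial homogeneity $v_\lambda(x,s) = e^{\psi(\lambda)s - \lambda x}$, this reduces to an explicit combination of $e^{\psi(\lambda)s}$, $e^{\psi(\mu)s}$ and $e^{\psi(\lambda+\mu)s}$, each finite on $s\in[0,1]$ provided $\lambda$, $\mu$, and $\lambda+\mu$ remain strictly below $\lambda_0$. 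For $(\lambda,\mu)\in\{(0,0),(0,\lambda_*)\}$ this is automatic; for $(\delta,\lambda_*)$ one chooses $\delta\in(0,\lambda_0-\lambda_*)$, which is a non-empty interval since $\lambda_*<\lambda_0$. The computation is parallel to the appendix proof of Proposition~\ref{prop:nl+logistic}, the only new bookkeeping being an additional finite factor $(\L b)(\cdot)$ arising from the branching kernel~\ref{item:P3}.
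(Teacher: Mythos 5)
Your overall route is the paper's own: the paper omits this proof precisely because it repeats the appendix proof of Proposition~\ref{prop:nl+logistic}, with $a$ replaced by $a+b$ in $\psi$ and an extra finite factor $(\L b)(\cdot)$ appearing in the source term of the equation for $w_{\la,\mu}$. Your handling of Assumption~\ref{assumm:doesnt_explode}, of \eqref{assum:supercritical} (indeed $\psi(0)=(\L a)(0)+(\L b)(0)-1=1$), and of the finiteness of $w_{0,0}(0,1)$, $w_{0,\la_*}(0,1)$, $w_{\delta,\la_*}(0,1)$ via the explicit Duhamel representation matches that computation.

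There is, however, a gap in your existence argument for $\la_*$. You claim that strict convexity of $\psi$, combined with the hypothesis that the infimum in \eqref{assum:c_*} is not attained at $\la_0$, "produces an interior minimizer". That does not follow: a convex function can be strictly decreasing on all of $(0,\la_0)$, in which case $\inf_{\la>0}\psi(\la)/\la$ is attained nowhere (in particular when $\la_0=\infty$), and the hypothesis, which only excludes attainment \emph{at} $\la_0$, does not rescue you. What closes this is the condition $I>0$, which you invoke only for the non-lattice property -- where it is not actually needed, since $a$ and $b$ are probability densities and $\X_1$ therefore already carries an absolutely continuous component. The real role of $I>0$ is the lower bound $(\L a)(\la)+(\L b)(\la)\geq I\delta e^{\la l}$, which gives $\psi(\la)/\la\to\infty$ as $\la\to\infty$; together with $\psi(\la)/\la\sim\psi(0)/\la\to\infty$ as $\la\to0_+$, the continuous function $\psi(\la)/\la$ must attain its infimum at an interior point of $(0,\la_0)$, except in the case $\la_0<\infty$ with the infimum approached only as $\la\to\la_0^-$, which is exactly the case excluded by the hypothesis. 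This is the first display of the appendix proof of Proposition~\ref{prop:nl+logistic}. Once $\la_*\in(0,\la_0)$ is secured, your first-order condition for $c_*=\psi'(\la_*)$ and the remainder of the proposal go through.
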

	
	We omit the proof, since it repeats the one of Proposition~\ref{prop:nl+logistic}.
\end{exmp}

\begin{exmp}[X3+P2]\label{exmp:X3+P2}
	The $(X_0,\pi)$-branching Markov process $\X$ defined by \ref{item:X3} and \ref{item:P2} correspond to the following S-equation
	\[
		\partial_t u(x,t) = \frac{1}{2} \partial^2_{xx} u(x,t) - u(x,t) + F(u(x,t)), \quad F(u) = \sum\limits_{j\in\N} p_j u^j.
	\]
	After the change of variables $u\to1-u$ we arrive to the Fisher-KPP equation \cite{Fis1937,KPP1937}.
	The Laplace transforms $v_\la(x,t)$ satisfies, 
	\[
		\partial_t v_\la(x,t) = \frac{1}{2} \partial^2_{xx} v_\la(x,t) + (\sum_{n\geq1} np_n-1) v_\la(x,t), \quad v_\la(x,t) = e^{\frac{\la^2 t}{2} + (\sum_{n\geq1}np_n -1)t -\la x}.
	\]
 	As a result,
	\[
		\psi(\la) = \frac{\la^2}{2} + (\sum_{n\geq1}np_n -1), \qquad c_*=\sqrt{\sum 2np_n -2}, \qquad \la_*=\sqrt{\sum np_n -1},
	\]
	where the well known formula for the minimal speed of the traveling wave in the Fisher-KPP equation is obtained. Theorem~\ref{thm:main} in this case does not state anything new, see \cite{Bra1983,Lau1985,LS1987,McK1975,Uch1978}.
\end{exmp}

\begin{exmp}[X3+P3]\label{exmp:X3+P3}
	The $(X_0,\pi)$-branching Markov process $\X$ defined by \ref{item:X3} and \ref{item:P3} correspond to the following S-equation 
	\[
		\partial_t u(x,t) = \frac{1}{2} \partial^2_{xx} u(x,t) - u(x,t) + u(x,t)(b*u)(x,t). 
	\]
	The Laplace transform satisfies,
	\[
		\partial_t v_\la(x,t) = \frac{1}{2} \partial^2_{xx} v_\la(x,t) + (b*v)_\la(x,t), \quad v_\la(x,t) = e^{\frac{\la^2 t}{2} + t(\L a)(\la) - \la x}.
	\]
	The log-Laplace transform has the following form,
	\[
		\psi(\la) = \frac{\la^2}{2} + (\L a)(\la).
	\]
\end{exmp}

\section{A relation between branching Markov processes and evolution equations}

The purpose of this section is to formulate results of Ikeda, Nagasawa and Watanabe on connection between branching Markov processes and evolution equations.  We will follow the notations of \cite{INW1968i,INW1968ii,INW1969iii}.

We will denote $\N_0=\N\cup\{0\}=\{0,1,2,\dots\}$,
$\Rc=\R\cup\{\infty\}$ - the one-point (Alexandroff) compactification of the real line $\R$.
We write $\Rs^n$ for $n$-fold symmetric product of $\R$ (i.e. we identify permutations of coordinates), 
and $\Rcs^n$ for $n$-fold symmetric product of $\Rc$. 
Denote by $\bR=\bigcup\{ \Rs^n\,\big\vert\, n\in\N_0\}$ the topological sum of $\Rs^n$, where $\Rs^0=\{\varnothing\}$, $\varnothing$ an extra point, 
\vspace{-3mm}
\[
	\wh{\bR}=\big(\bigcup\{\Rcs^n\,\big\vert\,n\in\N_0\}\big) \cup \{\iD\},
	\vspace{-3mm}
\] 
the topological sum of $\Rcs^n$, where $\Rcs^0=\{\varnothing\}$, $\varnothing$ and $\iD$ - extra points, so that $\bRc\backslash\{\iD\}$ is compactified by $\{\iD\}$.
The set of the bounded Borel real-valued functions on $\R$, $\Rc$, $\bR$, and $\wh{\bR}$ will be denoted by $\B(\R)$, $\B(\Rc)$, $\B(\bR)$ and $\B(\wh{\bR})$ correspondingly.
A function $f$ defined on $\R$ will be always extended to $\Rc$ by $f(\infty)=0$.
The norm of $f\in\B(\Rc)$ and $g\in \B(\wh{\bR})$ will be denoted correspondingly 
\[
	\|f\|= \esssup_{x\in\Rc} |f(x)|, \qquad  \|g\|= \esssup_{x\in\wh{\bR}} |g(x)|.
\]
The bold symbols $\x$, $\y$, $\z$, $\X$, $\T$, $\P$, $\E$ will be used for objects related with the spaces $\bR$, $\bRc$.

For any $f\in\B(\Rc)$, $\|f\|\leq1$, denote   
\begin{equation}\label{eq:hat}
	\wh{f}(\x) = \left\{
		\begin{aligned}
			&1, &\quad& \x=\varnothing,&\\
			&f(x_1)f(x_2)\dots f(x_n), &\quad& \x=\{x_1,x_2, \dots , x_n\}\in\Rs^n,&\\
			&0, &\quad& \x=\iD.&
		\end{aligned}
		\right.
\end{equation}
Obviously, $\wh{f}\in \B(\wh{\bR})$. %If $\|f\|<1$ then $\wh{f}\in\B(\bRc\backslash\{\iD\})$ and vanishes as $\x\to\iD$.

We will use Dynkin's setup for Markov processes and refer the reader to \cite{INW1968i} and \cite{Sha1988} for more details.
We will always assume that the filtered probability space satisfies the usual conditions, meaning that it is complete and the filtration is right-continuous.
 
\begin{defn}\label{defn:BMP}
Let $\X=(\X_t, \P_{\x})$ be a right-continuous temporally homogeneous Markov process on $\wh{\bR}$, and let $\T_t$ be the transition semi-group on $\B(\wh{\bR})$ induced by $\X$, i.e.,
\begin{equation}\label{eq:MP_semigroup}
	\T_tf(\x)=\E[\x][f(\X_t)],\quad \x\in\wh{\bR},\ t\geq0,\ f\in\B(\wh{\bR}).
\end{equation}
Then the Markov process $\X$ on $\wh{\bR}$ is called a branching Markov process if it satisfies
	\begin{equation}\label{eq:branching_property}
		\T_t\wh{f}(\x) =\!\!\wwh{\vrt{\ (\T_t\wh{f}\,) }{\wh{\R}}}(\x), \quad \x\in\wh{\bR},\ t\geq0. 
	\end{equation}
	for every $f\in\B(\Rc)$, $\|f\|<1$.
\end{defn}

Let $\{\tau_j\}_{j\geq1}$ be splitting (or branching) times of the process $\X$. We denote 
\begin{equation}\label{eq:explosion_and_first_branching}
	\tau_* = \lim\limits_{n\to\infty} \tau_n,\qquad \tau=\tau_1.
\end{equation}
We also write $\tau_\iD$ for the hitting time of $\iD$ %and $\tau_\varnothing$ for the hitting time of $\varnothing$, namely, 
\begin{equation}\label{eq:explosion_and_extinction}
	\tau_\iD = \inf\{t: \X_t = \iD\}. %, \qquad  \tau_\varnothing = \inf\{t: \X_t = \varnothing\}.
\end{equation}
The hitting time of $\infty \in\Rc$ by one of the points in $\X$ will be denoted by 
\begin{equation}\label{eq:first_infty}
	\tau_\infty = \inf\{t: \exists\, y\in\X_t,\ y = \infty\}.
\end{equation}
A large class of branching Markov processes may be defined by its behaviour up to the first branching time $\tau$ and distribution of the new-born particles at $\tau$. Let us make this definition rigorous.

For a branching Markov process $\X$ started from $\X_0=\x=\{x\}\in\Rc$ we can define a new Markov process on $\Rc\cup\{\iD\}$ in the following way
\begin{equation}\label{eq:non-branching_part}
	X_t^0 = \X_t \1_{t<\tau} + \iD\1_{t\geq\tau}, \quad t\geq0.
\end{equation}
Thus $X^0=(X_t^0, \mathbb{P}^0_x)$ describes the behavior of a particle of $\X$ before its branching time $\tau$, and $X^0$ is terminated at $t=\tau$.
We call $X^0$ \textit{the non-branching part} of $\X$.
\begin{defn}[{c.f.~\cite[Definition~1.6]{INW1968i}}]\label{defn:X0_pi_BMP}
	Let $\X$ be a branching strong Markov process, which satisfies the following conditions 
	\begin{enumerate}
		\item \label{assum:leftlim_of_BMP_at_branching_time_exists} $\lim\limits_{n\to0} \X_{\tau-\frac{1}{n}}$ exists almost surely on $\{\tau<\infty\}$. 
	  \item \label{assum:branching_law} There exists a stochastic kernel $\pi(x,E)\text{ on }\Rc\times\wh{\bR}$ such that for each $\la>0,\ x\in\Rc$, and $E$ - Borel in $\wh{\bR}$, we have a.s. on $\{\tau<\infty\}$,
		$$\E[\{x\}][e^{-\la\tau},\, \X_{\tau}{\in} E\,\big\vert\, \X_{\tau-}] = \pi (\X_{\tau-},\, E) \E[\{x\}][e^{-\la\tau}\,\big\vert\,\X_{\tau-}].$$ 
		\item \label{assum:C1} $\P[\x][ \tau_* = \tau_\iD,\ \, \tau_*<\infty] = \P[\x][\tau_* <\infty]$. 
		\item \label{assum:C2} $\P[\x][\tau=s]=0, \quad s\geq0.$ 
		\item \label{assum:inftyinfty} $\P[\x][\tau_\infty<\infty]=0,\quad \x\in\Rs^n,\ n\in\N$.
	\end{enumerate}
	Let $X^0$ be the non-branching part of $\X$. Then we shall call $\X$ the $(X^0,\pi)$-branching Markov process.
\end{defn}
\begin{rem}\label{rem:on_def_of_X0_tau_BMP}
	\begin{enumerate}
		\item If a branching Markov process $\X$ satisfies items \ref{assum:leftlim_of_BMP_at_branching_time_exists} and \ref{assum:branching_law} in Definition~\ref{defn:X0_pi_BMP}, then we call $\pi(x,E)$ the branching law of $\X$.
		\item The item \ref{assum:inftyinfty} in Definition~\ref{defn:X0_pi_BMP} means that if non of the points in $\X_0$ equals $\infty$ then the same holds true for all point in $\X_t$, $t\geq0$.
			This assumptions was not required in \cite[Definition~1.6]{INW1968i}.
		\item By \cite[Theorem~4.4]{INW1969iii}, for $T^0_t$ defined by \eqref{eq:semigroup_of_nonbranching_part}, $K$ defined by \eqref{eq:position_of_nonbranch_at_first_branch}, and a stochastic kernel $\pi$ on $\Rc\times\bRc$, there exists a $(X^0,\pi)$-branching Makrov process. By \cite[Corollary,\,p.\,273]{INW1968i}, $(X^0,\pi)$-branching Markov processes with the same non-branching part and branching law are equivalent (have the same finite dimensional distribution). 
	\end{enumerate}
\end{rem}

The following Lemma shows that Assumption~\ref{assumm:doesnt_explode} is equivalent to the fact that the branching Markov process does not explode in finite time (c.f. \cite[p.115,\ Corollary~3]{INW1969iii}). 
\begin{lem}\label{lem:isnt_extinct_is_necessary}
Assumption~\ref{assumm:doesnt_explode} holds if and only if, (c.f. \eqref{eq:explosion_and_extinction})
	\[
		\P[\{x\}][\tau_\iD = \infty] = 1, \qquad x\in\Rc.
	\]
	Assumption~\ref{assumm:doesnt_explode} and item \ref{assum:C1} of Definition~\ref{defn:X0_pi_BMP} imply, (c.f. \eqref{eq:explosion_and_first_branching})
	\[
		\P[\{x\}][\tau_*<\infty]=0, \qquad x\in\Rc.
	\]
\end{lem}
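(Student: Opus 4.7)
The strategy is to leverage Theorem~\ref{thm:BMP_and_PDE} to identify the minimal non-negative solution of the S-equation with $f\equiv 1$ as an explicit explosion probability, and then to play this identification off against item~C1 for the second statement.

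By Theorem~\ref{thm:BMP_and_PDE}, the minimal non-negative solution of \eqref{eq:S} with $f\equiv 1$ is
\[
  u_{\min}(x,t) = \E[\{x\}][\wh{1}(\X_t)] = \P[\{x\}][\X_t \neq \iD] = \P[\{x\}][t<\tau_\iD],
\]
where the second equality uses $\wh{1}(\iD)=0$ together with $\wh{1}\equiv 1$ on $\bR$ (by definition \eqref{eq:hat}), and the third uses that $\iD$ is absorbing for $\X$. One then checks directly that the constant function $u\equiv 1$ is itself a non-negative solution of \eqref{eq:S} with $f\equiv 1$: this reduces to the identity $T_t^0 1(x)+(\bK\wh{1})(x,t)=\mathbb{P}^0_x[t<\tau]+\mathbb{P}^0_x[\tau\leq t]=1$, after observing that $\int \pi(y,d\z)\wh{1}(\z)=1$ since $\pi$ is a stochastic kernel concentrated on $\bR$.

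For the ``only if'' direction of the first equivalence, since $u\equiv 1$ is a solution and the minimal solution $u_{\min}$ satisfies $u_{\min}\leq 1$ pointwise, uniqueness of $u\equiv 1$ forces $u_{\min}\equiv 1$, i.e., $\P[\{x\}][\tau_\iD=\infty]=1$. For the ``if'' direction, if $\P[\{x\}][\tau_\iD=\infty]=1$, then $u_{\min}\equiv 1$ and any non-negative solution $u$ satisfies $u\geq u_{\min}=1$ by minimality; I would obtain the matching upper bound $u\leq 1$ by iterating the S-equation from the initial guess $u_0\equiv 1$, exploiting monotonicity of the map $v\mapsto T_t^0 f+\bK\wh{v}$ and the fact that $\wh{v}\leq 1$ on $\bR$ whenever $v\leq 1$, to produce a decreasing sequence bounded below by $u$ and converging to $1$.

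The second assertion then follows in one line from item~C1 of Definition~\ref{defn:X0_pi_BMP}, which asserts $\tau_* = \tau_\iD$ a.s.\ on $\{\tau_*<\infty\}$: under Assumption~\ref{assumm:doesnt_explode}, the first part yields $\P[\{x\}][\tau_\iD<\infty]=0$, and therefore
\[
  \P[\{x\}][\tau_*<\infty]=\P[\{x\}][\tau_*=\tau_\iD,\ \tau_*<\infty]\leq \P[\{x\}][\tau_\iD<\infty]=0.
\]
The main delicate point that I expect to require the most care is the uniqueness of $u\equiv 1$ as a non-negative solution (as opposed to merely as the minimal solution): the monotone iteration argument together with the standing assumption that $\pi$ is a stochastic kernel on $\bR$ (so that $u\equiv 1$ is indeed a solution) are the key ingredients, and without them one can only recover the minimal-solution statement rather than full uniqueness.
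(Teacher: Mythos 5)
Your overall route is the same as the paper's: the entire lemma rests on the identity $\P[\{x\}][\tau_\iD>t]=\T_t\wh{f}(\{x\})$ for $f\equiv1$, which the paper simply quotes from Ikeda--Nagasawa--Watanabe as \eqref{eq:explosion}, and which you re-derive from Theorem~\ref{thm:BMP_and_PDE} together with $\wh{1}(\iD)=0$, $\wh{1}\equiv1$ on $\bRc\setminus\{\iD\}$ and the fact that $\iD$ is a trap. Your one-line deduction of the second assertion from item~\ref{assum:C1} is exactly what the paper leaves implicit, and is correct.

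The one step that does not work as written is the upper bound $u\leq1$ in the ``if'' direction. Iterating $v\mapsto T_t^0f+\bK\wh{v}$ from $u_0\equiv1$ produces the \emph{constant} sequence $u_n\equiv1$ (since $1$ is a fixed point), and monotonicity of the map only yields $u=\Phi(u)\leq\Phi(1)=1$ \emph{if you already know} $u\leq1$ --- so the proposed iteration gives no control on a putative non-negative solution that exceeds $1$ somewhere, and in particular does not produce ``a decreasing sequence bounded below by $u$''. The clean way out is to note that the uniqueness in Assumption~\ref{assumm:doesnt_explode} is to be read in the class in which Theorem~\ref{thm:BMP_and_PDE} operates, namely $0\leq u\leq1$ (this is the class inherited from $0\leq f\leq1$ and the probabilistic meaning $u=\T_t\wh f$); there the sandwich $1=u_{\min}\leq u\leq1$ is immediate and no iteration is needed. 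If instead you insist on uniqueness among all non-negative solutions, excluding solutions with $u>1$ requires a separate argument that your proposal does not supply. Everything else --- the identification $u_{\min}(x,t)=\P[\{x\}][t<\tau_\iD]$, the verification that $u\equiv1$ solves \eqref{eq:S} (where you should also note that $\pi(y,\cdot)$ must not charge $\iD$ for $\int\pi(y,d\z)\wh{1}(\z)=1$), and the ``only if'' direction --- is sound.
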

\begin{proof}
	The statement follows from \cite[(1.10)]{INW1968i}. Namely, for $\x\in\bRc\backslash\{\varnothing,\iD\}$, $t\geq0$,
\begin{equation}\label{eq:explosion}
	\P[\x][\tau_\iD > t] = \T_t\wh{f}(\x), \quad f\equiv1.
\end{equation}
\end{proof}

The following theorem states that the semigroup of the $(X^0,\pi)$-branching Markov process $\X$ started from a one-point configuration $\{x\}$, $x\in\R$, is a minimal solution to the so-called S-equation
\begin{thm}\label{thm:BMP_and_PDE}
	Let $\T_t$ be the semi-group of a $(X^0,\pi)$-branching Markov process and Assumption~\ref{assumm:doesnt_explode} hold.
Then, for $f \in \B(\R)$, $0\leq f \leq1$, 
\begin{equation}\label{eq:minimal_sol_to_S_equation}
	u(x,t) = \vrt{(\T_t\wh{f}\,)}{\R} (x) = \E[\{x\}][\wh{f}(\X_t)],\quad x\in\R,\ t\geq0,
\end{equation}
is the minimal solution to the S-equation \eqref{eq:S} with the initial value $f$.
Moreover, for $u_0(x,t)\equiv0$, and $u_n(x,t) = T_t^0f(x) + (\bK \wh{u}_{n-1})(x,t)$,
\[
	u(x,t) = \lim\limits_{n\to\infty} u_n(x,t).
\] 
\end{thm}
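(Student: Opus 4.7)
The plan is to derive the S-equation by decomposing the expectation at the first branching time $\tau_1=\tau$, then give the Picard iterates $u_n$ a probabilistic meaning that forces $u_n\uparrow u$, and finally read minimality off the iteration.

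\textbf{Step 1 (S-equation for $u$).} I would split
\[
u(x,t)=\E[\{x\}][\wh{f}(\X_t),\,t<\tau_1]+\E[\{x\}][\wh{f}(\X_t),\,t\geq\tau_1].
\]
On $\{t<\tau_1\}$ the process has not yet branched, so $\X_t=\{X^0_t\}$ and $\wh{f}(\X_t)=f(X^0_t)$, and by \eqref{eq:semigroup_of_nonbranching_part} the first summand equals $T^0_tf(x)$. For the second, I would apply the strong Markov property at $\tau_1$ jointly with the branching property \eqref{eq:branching_property}: conditionally on $\mathcal{F}_{\tau_1}$ and on $\{\tau_1\leq t\}$, the particles constituting $\X_{\tau_1}$ evolve as independent copies of $\X$, so
\[
\E[\{x\}][\wh{f}(\X_t)\,\big|\,\mathcal{F}_{\tau_1}]\,\1_{\tau_1\leq t}=\wh{u}(\X_{\tau_1},t-\tau_1)\,\1_{\tau_1\leq t}.
\]
A further conditional expectation with respect to $(X^0_{\tau_1-},\tau_1)$, using item \ref{assum:branching_law} of Definition~\ref{defn:X0_pi_BMP} to replace $\X_{\tau_1}$ by the kernel $\pi(X^0_{\tau_1-},d\z)$ and the joint law \eqref{eq:position_of_nonbranch_at_first_branch} of $(\tau_1,X^0_{\tau_1-})$, collapses the second summand to $(\bK\wh{u})(x,t)$. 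Hence $u$ solves \eqref{eq:S}.

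\textbf{Step 2 (Interpretation of $u_n$).} By induction on $n$, I would prove
\[
u_n(x,t)=\E[\{x\}][\wh{f}(\X_t),\,t<\tau_n],\qquad n\in\N.
\]
The case $n=1$ is the first summand computation of Step~1, since $u_1=T^0_tf$. For the inductive step, applying the decomposition of Step~1 to $u_n=T^0_tf+\bK\wh{u}_{n-1}$ and using the inductive hypothesis for $\wh{u}_{n-1}$ reinterprets $\bK\wh{u}_{n-1}(x,t)$ as $\E[\{x\}][\wh{f}(\X_t),\,\tau_1\leq t<\tau_n]$; here I use that, by the branching property, the $(n-1)$-th branching time of the offspring subtree rooted at $\tau_1$ matches $\tau_n$ of the original process. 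Adding the contribution from $\{t<\tau_1\}$ yields the claim.

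\textbf{Step 3 (Limit and minimality).} The events $\{t<\tau_n\}$ increase to $\{t<\tau_*\}$, and Lemma~\ref{lem:isnt_extinct_is_necessary} combined with item~\ref{assum:C1} of Definition~\ref{defn:X0_pi_BMP} gives $\P[\{x\}][\tau_*<\infty]=0$ under Assumption~\ref{assumm:doesnt_explode}. Monotone convergence therefore produces $u_n(x,t)\uparrow \E[\{x\}][\wh{f}(\X_t)]=u(x,t)$. For minimality, let $v\geq 0$ be any solution of \eqref{eq:S} with initial value $f$. Since the hat operation is coordinate-wise monotone on non-negative functions and $\bK$ preserves inequalities, induction gives $v\geq u_n$ for all $n$: the base case $v\geq 0=u_0$ is trivial, and $v\geq u_{n-1}$ forces $v=T^0_tf+\bK\wh{v}\geq T^0_tf+\bK\wh{u}_{n-1}=u_n$. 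Passing to the limit yields $v\geq u$.

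The delicate point is Step~1, namely the rigorous passage from the raw expectation to $\wh{u}(\X_{\tau_1},t-\tau_1)$. It requires that $\tau_1$ be a stopping time together with the right-continuity of the filtration, the strong Markov property, and the product identity \eqref{eq:branching_property} of Definition~\ref{defn:BMP}; one must also track carefully the disintegration of the joint law of $(\tau_1,X^0_{\tau_1-},\X_{\tau_1})$ via item~\ref{assum:branching_law} of Definition~\ref{defn:X0_pi_BMP} and \eqref{eq:position_of_nonbranch_at_first_branch} in order to collapse everything to $\bK\wh{u}$. Once this identity is in place, Steps 2--3 are routine inductions combined with the non-explosion hypothesis.
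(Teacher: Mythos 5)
Your route is genuinely different from the paper's: the paper does not reprove this statement at all, but merely checks (via item \ref{assum:inftyinfty} of Definition~\ref{defn:X0_pi_BMP} and Assumption~\ref{assumm:doesnt_explode}) that the process started from $\{x\}$, $x\in\R$, never visits $\infty$ or $\iD$, and then invokes \cite[p.114, Corollary~2]{INW1969iii}. You instead reconstruct the Ikeda--Nagasawa--Watanabe argument from scratch. Your Steps 1 and 3 are sound (modulo the measurability caveats you yourself flag, and the fact that \eqref{eq:branching_property} is stated for $\|f\|<1$, so $\|f\|=1$ requires a monotone limit), but note that you never use the reduction the paper does use: to even make sense of $u(x,t)=\E[\{x\}][\wh{f}(\X_t)]$ with the conventions $f(\infty)=0$, $\wh{f}(\iD)=0$, one should record that $\X_t\in\bR$ almost surely.

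There is, however, a genuine error in Step 2. The identity $u_n(x,t)=\E[\{x\}][\wh{f}(\X_t),\,t<\tau_n]$ fails for $n\geq 3$. The quantity $\bK\wh{u}_{n-1}(x,t)$ equals $\E[\{x\}][\wh{u}_{n-1}(\X_{\tau_1},t-\tau_1),\,\tau_1\leq t]$, and since $\wh{u}_{n-1}$ is a \emph{product} over the children born at $\tau_1$, the event it encodes is ``\emph{each} offspring subtree satisfies the $(n-1)$-st level condition.'' The event $\{\tau_1\leq t<\tau_n\}$, by contrast, is ``the \emph{total} number of branchings among all subtrees after $\tau_1$ is less than $n-1$,'' which is not a product event once two or more children are born: with $n=3$ and two children each branching once, one has $\tau_3\leq t$ yet each subtree satisfies $t-\tau_1<\tau_2^{(j)}$. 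So your inductive step, as justified by ``the $(n-1)$-th branching time of the offspring subtree matches $\tau_n$,'' does not go through. The fix is standard: prove instead that $u_n(x,t)=\E[\{x\}][\wh{f}(\X_t),\,e_n(t)]$, where $e_n(t)$ is the event that the genealogical tree up to time $t$ has fewer than $n$ generations; this \emph{is} a product event over the first-generation subtrees, the induction closes, and since $e_n(t)\uparrow$ and $\{\tau_*>t\}\subseteq\bigcup_n e_n(t)$ (a finite tree has finite depth), Lemma~\ref{lem:isnt_extinct_is_necessary} and monotone convergence still yield $u_n\uparrow u$ in Step 3. With that correction the argument is complete.
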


\begin{proof}
	We remind, that we extend $f$ to $\Rc$ by $f(\infty)=0$.
	By item \ref{assum:inftyinfty} of Definition~\ref{defn:X0_pi_BMP} and Assumption~\ref{assumm:doesnt_explode}, if $X_0=\{x\}$, $x\in\R$, then $\X_t\in\bR$, $t\geq0$.
 Thus, statement of the theorem follows from \cite[p.114, Corollary~2]{INW1969iii}.
\end{proof}

For $f\in\B(\Rc)$, denote   
\begin{equation}\label{eq:check}
	\wc{f}(\x) = \left\{
		\begin{aligned}
			&0, &\quad& \x\in\{\varnothing, \iD\},&\\
			&f(x_1)+\dots+f(x_n), &\quad& \x=\{x_1, \dots , x_n\}\in\Rcs^n.&
		\end{aligned}
		\right.
\end{equation}
Obviously, $\wc{f}\in \B(\bRc)$.
%We will also write $(f)\!\wc{\phantom{)}} := \wc{f}$.
We will also write $(f)^{\wc{\phantom{j}}}=\wc{f}$. 

\begin{thm}\label{thm:f_fg}
	Let Assumption~\ref{assumm:doesnt_explode} hold, and for $f,g\geq0$ -- Borel on $\R$ (possibly unbounded), 
	\begin{align}
		v(x,t) &= (\T_t\wc{f})(\{x\}), \label{eq:expectation_semigroup}\\ 
		w(x,t) &=\T_t(\wc{f}\wc{g})(\{x\}), \label{eq:ww}
	\end{align}
	are finite for $x\in\R$, $t\in[0,T]$. Then $v$ and $w$ are the minimal solutions in the class of non-negative functions to the following equations correspondingly
	\begin{align}
		v(x,t) &= (T_t^0 f)(x) + (\bK \wc{v})(x,t), \label{eq:integral_eq_of_exp_semigroup}\\
		w(x,t) &= (T_t^0 fg)(x) + \bK (\wc{w} +\wc{v\!}_f\wc{v\!}_g- \wwc{v_f v_g}) (x,t), \label{eq:w}
	\end{align} where $x\in\R$, $t\in[0,T]$, and 
	\[
		\wc{v} = \wc{v(\cdot,t)}(\x),\ \ v_f(x,t)=(\T_t\wc{f})(\{x\}), \ \ v_g(x,t) = (\T_t\wc{g})(\{x\}).
	\]
\end{thm}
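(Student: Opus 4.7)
The strategy closely parallels the proof of Theorem~\ref{thm:BMP_and_PDE}, but adapted from multiplicative to additive functionals of $\X_t$. The plan is to decompose the expectations defining $v$ and $w$ according to whether the first branching time $\tau$ has occurred by $t$, use the strong Markov property at $\tau$ together with the branching law $\pi$, and then split the resulting integrand into contributions coming from single offspring sub-trees versus pairs of distinct offspring sub-trees. Throughout, we reduce to bounded $f,g$ by the truncation $f_N := f\wedge N$, $g_N := g\wedge N$, noting that $\wc{f_N}\nearrow\wc{f}$ and $\wc{f_N}\wc{g_N}\nearrow\wc{f}\wc{g}$ pointwise, so monotone convergence combined with the assumed finiteness of \eqref{eq:expectation_semigroup}–\eqref{eq:ww} lets us pass to the limit in each term.

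The first ingredient I need is the \emph{additive} form of the branching property: for $\x=\{x_1,\dots,x_n\}\in\Rs^n$ and bounded Borel $h\geq 0$,
\[
   (\T_t\wc{h})(\x) \;=\; \sum_{j=1}^n (\T_t\wc{h})(\{x_j\}).
\]
This follows from \eqref{eq:branching_property} applied to $f=e^{-\lambda h}$ (so that $\wh{f}=e^{-\lambda\wc{h}}$), taking logarithms and differentiating in $\lambda$ at $0$; the truncation argument above controls the differentiation. Having this, the equation for $v$ is obtained by conditioning on $\tau$: on $\{\tau>t\}$ we have $\X_t=\{X^0_t\}$ so $\wc{f}(\X_t)=f(X^0_t)$, while on $\{\tau\leq t\}$ the strong Markov property at $\tau$ together with item~\ref{assum:branching_law} of Definition~\ref{defn:X0_pi_BMP} gives an offspring configuration $\bm\eta\sim\pi(X^0_{\tau-},\,\cdot\,)$ after which the additive branching property produces $\wc{v(\cdot,t-\tau)}(\bm\eta)$. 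Integrating over $(\tau,X^0_{\tau-})$ against $K(x;ds\,dy)$ and then over $\pi(y,d\z)$ yields exactly $(\bK\wc{v})(x,t)$, giving \eqref{eq:integral_eq_of_exp_semigroup}.

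For $w$, the same conditioning on $\tau$ is used, but the key algebraic step is expanding $\wc{f}(\X_{t-\tau})\wc{g}(\X_{t-\tau})$ for a post-branching configuration $\bm\eta=\{\eta_1,\dots,\eta_n\}$: the sub-trees evolve independently, so writing $\X_{t-\tau}=\bigcup_j\X^{(j)}_{t-\tau}$,
\[
   \E_{\bm\eta}\bigl[\wc{f}(\X_{t-\tau})\wc{g}(\X_{t-\tau})\bigr]
   \;=\; \sum_{j} w(\eta_j,t-\tau) + \sum_{j\neq k} v_f(\eta_j,t-\tau)\,v_g(\eta_k,t-\tau).
\]
The first sum is $\wc{w(\cdot,t-\tau)}(\bm\eta)$, and the second is $\wc{v_f}\wc{v_g}(\bm\eta,t-\tau)-\wwc{v_f v_g}(\bm\eta,t-\tau)$ since the diagonal contributes $\sum_j v_f(\eta_j)v_g(\eta_j)$. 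On $\{\tau>t\}$ one simply has $\wc{f}(\X_t)\wc{g}(\X_t)=(fg)(X^0_t)$, producing the $T^0_t(fg)$ term. Collecting both pieces yields \eqref{eq:w}, and the non-negativity of the bracketed expression $\wc{w}+\wc{v_f}\wc{v_g}-\wwc{v_f v_g}$ (which is the off-diagonal double sum above) keeps the fixed-point map monotone.

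Finally, for minimality I use Picard-type iteration exactly as in Theorem~\ref{thm:BMP_and_PDE}. For \eqref{eq:integral_eq_of_exp_semigroup}, set $v_0\equiv 0$ and $v_{n+1}=T_t^0 f+\bK\wc{v_n}$; monotonicity in the argument gives $v_n\nearrow v_\infty$, and passing to the limit by monotone convergence shows $v_\infty$ solves the equation. For any other non-negative solution $\tilde v$, induction gives $v_n\leq\tilde v$, hence $v_\infty\leq\tilde v$. Identifying $v_\infty$ with the probabilistic $v$ of \eqref{eq:expectation_semigroup} is done by observing that $v_n$ equals the expectation restricted to the event that at most $n$ branchings occurred by time $t$, whose probability tends to $1$ by Assumption~\ref{assumm:doesnt_explode} combined with Lemma~\ref{lem:isnt_extinct_is_necessary} (ensuring $\tau_*=\infty$ a.s.). The identical scheme works for \eqref{eq:w}, with $v_f,v_g$ now treated as given data and $\bK(\wc{\,\cdot\,}+\wc{v_f}\wc{v_g}-\wwc{v_f v_g})$ as the monotone operator. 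The main technical obstacle throughout is the handling of unbounded $f,g$: one must verify that the truncation $f_N,g_N$ yields solutions $v^N,w^N$ that converge monotonically to $v,w$ and that each integral operator commutes with this limit, which reduces to monotone convergence once the finiteness hypothesis in the statement is invoked.
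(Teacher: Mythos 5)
Your proposal is correct in substance but reaches the equations \eqref{eq:integral_eq_of_exp_semigroup} and \eqref{eq:w} by a genuinely different route. The paper does not perform the first-branching-time (renewal) decomposition directly on the additive functionals $\wc{f}(\X_t)$ and $\wc{f}(\X_t)\wc{g}(\X_t)$; instead it cites \cite[Theorem~4.13]{INW1969iii} for $v$ and, for $w$, transfers the already-established multiplicative S-equation to the additive setting through the two-parameter generating function $q(x,t)=\T_t(\wwh{\gamma e^{\la f+\mu g}})(\{x\})$: expanding in powers of $\la,\mu$ and comparing coefficients of $\la^k\mu^l$ yields Lemma~\ref{lem:prod_of_sum} and hence the equation \eqref{eq:w11} for $w_\gamma$, with minimality of $w_\gamma$ inherited from minimality of $q$. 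Your direct conditioning on $\tau$ via the strong Markov property, together with the diagonal/off-diagonal split $\sum_{j\neq k}v_f(\eta_j)v_g(\eta_k)=\wc{v\!}_f\wc{v\!}_g-\wwc{v_f v_g}$, is the probabilistically transparent version of the same computation; your Picard iteration for minimality and the truncation $f\wedge N$, $g\wedge N$ for unbounded data coincide with the paper's. What the paper's detour buys is a clean way around two technicalities you gloss over: (i) the branching property \eqref{eq:branching_property} is only stated for $\|f\|<1$, so applying it to $e^{-\lambda h}$ and differentiating at $\lambda=0$ (where the function is identically $1$) is not directly licensed --- the paper's factor $\gamma\in(0,1)$, sent to $1_-$ at the end by monotone convergence, is exactly the regularization this step needs; and (ii) the identification of the Picard limit with the probabilistic expectation, which you assert via ``at most $n$ branchings occurred by time $t$,'' is in the paper delegated to the minimality clause of Theorem~\ref{thm:BMP_and_PDE} applied to $q$. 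Neither point breaks your argument, but both should be filled in for a complete proof.
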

\begin{rem}\label{rem:exp_sem}
	The semigroup $v$ given by \eqref{eq:expectation_semigroup} is called the expectation semigroup.	
\end{rem}

To prove Theorem~\ref{thm:f_fg}, we will need the following variation of \cite[Lemma~4.7,\,4.8]{INW1969iii}. 
\begin{lem}\label{lem:prod_of_sum}
	Let $\X$ be $(X_0,\pi)$-branching Markov process and Assumption~\ref{assumm:doesnt_explode} hold. Then for $f\geq0$, $g\geq 0$, $f,g\in\B(\R)$, $\gamma\in(0,1]$, $k,l\in\N_0$, $\x\in\Rs^n$, $x\in\R$, $t\geq0$, 
	\begin{align}
		\T_t\big(\wh{\gamma}\,(\wc{f})^k(\wc{g})^l\big)(\x) = \sum_{\substack{k_1+\dots+k_n=k\\k_1,\dots,k_n\in\N_0}} &\sum_{\substack{l_1+\dots+l_n=l\\l_1,\dots,l_n\in\N_0}} \frac{k!}{k_1!\dots k_n!} \frac{l!}{l_1!\dots l_n!} \times \nonumber\\
		& \times\prod_{j=1}^n \T_{t-s} \big( \wh{\gamma}\,(\wc{f})^{k_j} (\wc{g})^{l_j}\big)(\{x_j\}). \label{eq:branching_for_product} 
	\end{align}
	\begin{align}
		&\T_t \big(\wh{\gamma}(\wc{f})^k(\wc{g})^l\big)(\{x\}) = (T_t^0 \gamma\, f^k g^l)(x) + \big(\bK\, \T_{t-s}\big(\wh{\gamma}\,(\wc{f})^k(\wc{g})^l\big)\big)(x,t) \label{eq:fg},
	\end{align}
	where $\wh{\gamma}(\z) := \gamma^{n},\ \z\in\Rs^n$.
	Moreover, for $\gamma\in(0,1)$, $\T_t\big(\wh{\gamma}\,(\wc{f})^k(\wc{g})^l\big)(\{x\})$ is the minimal solution to \eqref{eq:fg}.
\end{lem}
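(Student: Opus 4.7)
My plan is to derive the first identity \eqref{eq:branching_for_product} by realising $\X$ under $\P_{\x}$ as a superposition of independent sub-families, the second identity \eqref{eq:fg} by the strong Markov property of $\X$ at the first branching time $\tau$, and the minimality claim by Picard iteration coupled with an induction on $k+l$.

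For \eqref{eq:branching_for_product}, I would realise $\X$ started from $\x=\{x_1,\dots,x_n\}$ as the superposition $\X_t = \bigsqcup_{j=1}^n \X^{(j)}_t$, where the $\X^{(j)}$ are $n$ independent copies of the $(X^0,\pi)$-branching Markov process, each started from $\{x_j\}$; this coupling reproduces the law of $\X$ under $\P_{\x}$ thanks to the uniqueness statement for $(X^0,\pi)$-branching Markov processes recalled in Remark~\ref{rem:on_def_of_X0_tau_BMP}(3). One then has $\wh\gamma(\X_t) = \prod_j \wh\gamma(\X^{(j)}_t)$, $\wc{f}(\X_t) = \sum_j \wc{f}(\X^{(j)}_t)$ and likewise for $g$. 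Applying the multinomial theorem to $(\wc{f}(\X_t))^k$ and $(\wc{g}(\X_t))^l$, multiplying by $\wh\gamma(\X_t)$ and taking expectations — the exchange with the finite sum is trivial, and independence of the $\X^{(j)}$ factorises the product — yields \eqref{eq:branching_for_product} immediately.

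For \eqref{eq:fg}, I would apply the strong Markov property of $\X$ at $\tau$. Decomposing $\T_t(\wh\gamma(\wc{f})^k(\wc{g})^l)(\{x\})$ along $\{\tau > t\}$ and $\{\tau \le t\}$: on $\{\tau > t\}$ the process has not branched, $\X_t = \{X^0_t\}$, so $\wh\gamma(\X_t)(\wc{f}(\X_t))^k(\wc{g}(\X_t))^l = \gamma f(X^0_t)^k g(X^0_t)^l$, contributing precisely $(T_t^0 \gamma f^k g^l)(x)$ by \eqref{eq:semigroup_of_nonbranching_part}; on $\{\tau \le t\}$, the strong Markov property together with item~\ref{assum:branching_law} of Definition~\ref{defn:X0_pi_BMP} gives, conditionally on $(\tau, X^0_{\tau-}, \X_\tau) = (s, y, \z)$, the contribution $\T_{t-s}(\wh\gamma(\wc{f})^k(\wc{g})^l)(\z)$, which, after integrating against the joint law \eqref{eq:position_of_nonbranch_at_first_branch} of $(\tau,X^0_{\tau-})$ and the branching kernel $\pi(y, d\z)$, reproduces the $\bK$-term in \eqref{eq:fg} by the definition \eqref{eq:K} of $\bK$.

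The minimality claim in \eqref{eq:fg} for $\gamma\in(0,1)$ I would establish by Picard iteration coupled with induction on $N:=k+l$. The base case $N=0$ reduces \eqref{eq:fg} to the S-equation \eqref{eq:S} with bounded initial datum $\gamma\in(0,1)$, so Theorem~\ref{thm:BMP_and_PDE} gives directly that $\T_t\wh\gamma(\{x\})$ is the minimal non-negative solution. For the inductive step with $N\ge1$, I would use \eqref{eq:branching_for_product} inside $\bK$ to isolate the ``diagonal'' contribution (the one in which some single child inherits the whole exponent pair $(k,l)$, the remaining particles contributing only through $\wh\gamma$) from the ``mixed'' contributions, which only involve pairs $(k',l')$ with $k'+l'<N$ and are, by the inductive hypothesis, known minimal non-negative functions. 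The equation for $u^{(k,l)}(x,t):=\T_t(\wh\gamma(\wc{f})^k(\wc{g})^l)(\{x\})$ then takes the form $u = F + \bK\Phi[u]$ with $F\ge0$ and $\Phi$ monotone non-decreasing in $u$; the Picard iterates $u_0\equiv0$, $u_{m+1}:=F+\bK\Phi[u_m]$ are non-negative and monotone increasing, converging to a non-negative solution dominated, by monotonicity, by every other non-negative solution. I expect the main technical obstacle to be the bookkeeping of the mixed terms and the verification that the Picard iterates stay uniformly bounded in $m$; here the factor $\wh\gamma$ with $\gamma<1$ supplies the geometric damping across generations, exactly as in the proof of Theorem~\ref{thm:BMP_and_PDE}.
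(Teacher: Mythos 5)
Your route is genuinely different from the paper's. The paper obtains both \eqref{eq:branching_for_product} and \eqref{eq:fg} by a generating-function argument: it applies the branching property \eqref{eq:branching_property} and Theorem~\ref{thm:BMP_and_PDE} to the single function $\gamma e^{\la f+\mu g}$ (whose norm is $<1$ for $|\la|,|\mu|$ small when $\gamma<1$), uses $\wwh{e^{\la f+\mu g}}=e^{\la\wc{f}+\mu\wc{g}}$ to expand everything as a power series in $(\la,\mu)$, matches coefficients of $\la^k\mu^l$, and recovers $\gamma=1$ by monotone convergence. Your direct probabilistic derivations --- independent superposition plus the multinomial theorem for \eqref{eq:branching_for_product}, and the strong Markov property at the first branching time $\tau$ for \eqref{eq:fg} --- are sound in substance and arguably more transparent, but they re-prove inside the lemma what the paper imports via Theorem~\ref{thm:BMP_and_PDE} from \cite{INW1969iii}. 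Two points would need care in your version: (i) the branching property is given here only as the analytic identity \eqref{eq:branching_property} for $\|f\|<1$, so the claim that $\X$ under $\P_{\x}$ is the independent superposition of copies started from the $\{x_j\}$, applied to the \emph{unbounded} functionals $(\wc f)^k(\wc g)^l$, must either be justified by checking that the superposition is itself a $(X^0,\pi)$-branching Markov process and invoking Remark~\ref{rem:on_def_of_X0_tau_BMP}, or be obtained by a limiting procedure --- which is exactly what the paper's power-series expansion accomplishes; (ii) the renewal decomposition at $\tau$ needs items 2 and 4 of Definition~\ref{defn:X0_pi_BMP} to identify the conditional law of $\X_\tau$ with $\pi(\X_{\tau-},\cdot)$ and to neglect $\{\tau=t\}$.

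The genuine gap is in the minimality claim. Your Picard iteration, after the inductive linearisation, converges monotonically to \emph{the} minimal non-negative solution of the resulting equation, and since $\T_t\big(\wh{\gamma}(\wc f)^k(\wc g)^l\big)(\{x\})$ is a non-negative solution by the first part, the Picard limit is dominated by it. But the statement asserts that this semigroup expression \emph{equals} the minimal solution, and nothing in your outline supplies the reverse inequality: a priori the Picard limit could be strictly smaller. You need an identification step, for instance showing that the $m$-th Picard iterate equals $\E[\{x\}][\wh{\gamma}(\X_t)(\wc f(\X_t))^k(\wc g(\X_t))^l,\ \tau_{m+1}>t]$ (the expectation restricted to at most $m$ branchings before time $t$) and letting $m\to\infty$ using $\P[\{x\}][\tau_*<\infty]=0$, which Assumption~\ref{assumm:doesnt_explode} provides through Lemma~\ref{lem:isnt_extinct_is_necessary}. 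The paper avoids this issue by a different (if terse) argument: it transfers minimality coefficient-by-coefficient from the minimality of $q=\T_t(\wwh{\gamma e^{\la f+\mu g}})(\{x\})$ for the S-equation \eqref{eq:q}, already guaranteed by Theorem~\ref{thm:BMP_and_PDE}. Either repair works, but as written your minimality argument is incomplete.
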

\begin{proof}
	Let $\gamma\in(0,1)$. Then there exists $\la_0>0$ such that
  \[
  	\gamma \|e^{\la f} e^{\mu g}\| < 1, \quad |\la|\leq \la_0,\ |\mu|\leq \la_0.
  \]
	We have $\wwh{e^{\la f+\mu g}} = e^{\la \wc{f}+ \mu \wc{g}}$, and for $\y\in\bR\backslash\{\varnothing\},\ |\la|<\la_0,\ |\mu|<\mu_0$,
	\begin{equation}\label{eq:exp_series_expansion}
		\wwh{e^{\la f+\mu g}}(\y) = \sum_{k,l\in\N_0} \frac{\la^k}{k!}\frac{\mu^l}{l!} (\wc{f})^k(\y)(\wc{g})^l(\y).
	\end{equation}
	By the branching property \eqref{eq:branching_property},
	\[
		\T_t(\wwh{\gamma e^{\la f+\mu g}})(\x) = \prod_{i=1}^n \T_t(\wwh{\gamma e^{\la f+\mu g}})(\{x_i\}).
	\]
	Expanding both sides of the equation into series with respect to  $\mu$ and $\la$ for $|\mu|<\la_0$, $|\la|<\la_0$ and collecting terms of the same order we obtain \eqref{eq:branching_for_product} for $\gamma\in(0,1)$. By the monotone convergence theorem, taking $\gamma\to1_{-}$ we extend \eqref{eq:branching_for_product} to $\gamma=1$.

	Next, by Theorem~2.3, 
  \[
  	q(x,t) := \T_t ( \wwh{\gamma e^{\la f+\mu g}})(\{x\}),\quad x\in\R,\ t\geq0,
  \]
	is the minimal solution to the S-equation
	\begin{equation}\label{eq:q}
		q(x,t) = T_t^0 (\gamma e^{\la f+\mu g}) + (\bK\, \wh{q}\,)(x,t). 
	\end{equation}
  Then, by \eqref{eq:exp_series_expansion} and \eqref{eq:q},
	\begin{align}
		q(x,t) &= \sum_{k,l\in\N_0} \frac{\la^k}{k!} \frac{\mu^l}{l!} \T_t(\wh{\gamma}\, (\wc{f})^k(\wc{g})^l)(\{x\}) \nonumber \\ 
		&= \sum_{k,l\in\N_0}  \frac{\la^k}{k!} \frac{\mu^l}{l!} \Big[ T_t^0(\gamma\, f^k g^l)(x) + (\bK\, \T_{t-s}(\wh{\gamma}\,(\wc{f})^k(\wc{g})^l))(x,t) \label{eq:w_series} \Big].
	\end{align}
Comparing the coefficients of $\la^k\mu^l$ and taking $\gamma\to1_-$ we have by the monotone convergence theorem that \eqref{eq:fg} holds for all $\gamma\in(0,1]$.
	Note that $\T_t(\gamma (\,\wc{f}\,)^k(\,\wc{g}\,)^l)(\{x\})$ is the minimal solution to the second equality in \eqref{eq:w_series} in the class of non-negative functions, otherwise we could substitute the minimal one instead of it in \eqref{eq:w_series} and violate the minimality of $q$ to \eqref{eq:q}.
\end{proof}

\begin{cor}\label{cor:nice_formulas}
	For $f,g\geq0$ -- Borel on $\R$, $x\in\R$, $\x\in\bR$, $t\geq0$, the following equations hold,
\begin{align}
	\T_t\wc{f}(\x) &= \wwc{\vrt{(\T_t\wc{f})}{\Rc}}(\x), \label{eq:nice_formula} \\
	\T_t(\wc{f}\wc{g})(\x) &= \wwc{\vrt{(\T_t\wc{f}\wc{g})}{\Rc}}(\x) + \T_t\wc{f}(\x)\, \T_t\wc{g}(\x) -  \wwc{ \vrt{(\T_t\wc{f})}{\Rc} \! \vrt{(\T_t\wc{f})}{\Rc} }(\x). \label{eq:another_nice_formula}
\end{align}
\end{cor}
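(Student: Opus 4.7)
The plan is to obtain both identities as direct specializations of the multinomial expansion proved in Lemma~\ref{lem:prod_of_sum} (equation \eqref{eq:branching_for_product}), applied with $\gamma=1$ and appropriate choices of $(k,l)$. The essential auxiliary input is that Assumption~\ref{assumm:doesnt_explode} (non-explosion), via Lemma~\ref{lem:isnt_extinct_is_necessary} and \eqref{eq:explosion}, forces $\T_t(\wh{1})(\{x\})=1$ for every $x\in\R$, so that every factor in \eqref{eq:branching_for_product} indexed by $(k_j,l_j)=(0,0)$ contributes $1$ and drops out of the product.

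For \eqref{eq:nice_formula}, I would take $k=1$, $l=0$ in \eqref{eq:branching_for_product}. The constraint $k_1+\cdots+k_n=1$ selects exactly one index $i$ with $k_i=1$ and zeros elsewhere, the multinomial coefficient equals $1$, and the $n-1$ remaining factors equal $1$ by the observation above. Summing over $i$ yields
\[
\T_t\wc{f}(\x) = \sum_{i=1}^n \T_t\wc{f}(\{x_i\}) = \wwc{\vrt{(\T_t\wc{f})}{\Rc}}(\x),
\]
which is \eqref{eq:nice_formula} on $\Rs^n$. The boundary cases $\x=\varnothing$ and $\x=\iD$ reduce to $0=0$ directly from \eqref{eq:check}.

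For \eqref{eq:another_nice_formula}, I would take $k=l=1$. The double sum now picks one index $i$ with $k_i=1$ (for $f$) and one index $j$ with $l_j=1$ (for $g$), with all multinomial coefficients equal to $1$. Splitting the pair $(i,j)$ into the diagonal $i=j$ and off-diagonal $i\neq j$, the right-hand side of \eqref{eq:branching_for_product} becomes
\[
\sum_{i=1}^n \T_t(\wc{f}\wc{g})(\{x_i\}) + \sum_{i\neq j} \T_t\wc{f}(\{x_i\})\,\T_t\wc{g}(\{x_j\}).
\]
Writing $\sum_{i\neq j}=\sum_{i,j}-\sum_{i=j}$ and factorizing the unrestricted double sum gives
\[
\Bigl(\sum_{i=1}^n \T_t\wc{f}(\{x_i\})\Bigr)\Bigl(\sum_{j=1}^n \T_t\wc{g}(\{x_j\})\Bigr)-\sum_{i=1}^n \T_t\wc{f}(\{x_i\})\,\T_t\wc{g}(\{x_i\}).
\]
By \eqref{eq:nice_formula} just proved, the product of sums equals $\T_t\wc{f}(\x)\,\T_t\wc{g}(\x)$, while the subtracted diagonal is exactly $\wwc{\vrt{(\T_t\wc{f})}{\Rc}\vrt{(\T_t\wc{g})}{\Rc}}(\x)$; combining gives \eqref{eq:another_nice_formula} (with the $\wc{g}$ in place of the evident typographical $\wc{f}$ in the second factor of the last term of the statement).

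I do not anticipate any real obstacle: the argument is pure bookkeeping on the multinomial expansion already established in Lemma~\ref{lem:prod_of_sum}. The only point that requires a little care is ensuring that the factors indexed by $(0,0)$ really equal $1$, which is precisely where non-explosion is used; if $f,g$ are unbounded, the finiteness hypotheses of Theorem~\ref{thm:f_fg} guarantee that every term appearing above is finite, so the termwise manipulations (splitting a restricted sum into unrestricted minus diagonal, and factoring a double sum) are legal.
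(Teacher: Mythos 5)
Your proposal is correct and follows essentially the same route as the paper, which derives both identities from the multinomial expansion \eqref{eq:branching_for_product} of Lemma~\ref{lem:prod_of_sum} with $\gamma=1$ (using non-explosion to kill the factors indexed by $(0,0)$) and then passes to unbounded $f,g$ by truncating at level $n$ and applying monotone convergence; you have merely written out the $(k,l)=(1,0)$ and $(1,1)$ bookkeeping explicitly. Your observation that the last term of \eqref{eq:another_nice_formula} should read $\vrt{(\T_t\wc{f})}{\Rc}\,\vrt{(\T_t\wc{g})}{\Rc}$ is also correct.
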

\begin{proof}
	For $f,g\in\B(\R)$, the statement follows from \eqref{eq:fg}. Next, consider $\min\{f,n\}$ and $\min\{g,n\}$ and apply the monotone convergence theorem, for $n\to\infty$. 
\end{proof}

\begin{proof}[Proof of Theorem \ref{thm:f_fg}]
We assume first that $f,g\in \B(\R)$.
Then, for the function $v$ the statement follows from {\cite[Theorem~4.13]{INW1969iii}}.
Let us prove it for $w$. By Lemma~\ref{lem:prod_of_sum}, for $\gamma\in(0,1]$, 
\[
	w_\gamma(x,t):=\T_t(\gamma\wc{f}\wc{g})(\{x\}),
\]
satisfies the following equation
\begin{equation}\label{eq:w11}
	w_\gamma(x,t) = (T_t^0 \gamma f g)(x) + (\bK \, \wh{\gamma}(\wc{w\!}_\gamma +\wc{v\!}_f \wc{v\!}_g - \wc{v_f v_g}))(x,t).
\end{equation}
	Denote inductively, for $\gamma\in(0,1]$,
	\[
		T_{t}^{\gamma,j} h := \bK\, \wh{\gamma}\,( \wwc{T_t^{\gamma,j-1} h} +\wc{v\!}_f \wc{v\!}_g - \wwc{v_f v_g}),
	\]
	where $T_t^{\gamma,0}h:= T_t^0 (\gamma h)$.
	Then $\sum_{j\in\N_0} T_t^{\gamma,j}\, fg$ is the solution to \eqref{eq:w11}.
	In fact, it is the minimal solution in the class of non-negative functions.
	Indeed, let $w\geq0$ solves \eqref{eq:w11}, then
	$\wc{v\!}_f\wc{v\!}_g-\wwc{v_f v_g} \geq0$ implies  for any $n\in\N_0$, 
	\[
		w(x,t) = \sum_{j=0}^n (T_t^{\gamma,j} fg) + (T_t^{\gamma,n+1} w) \geq \sum_{j=0}^n (T_t^{\gamma,j} fg) .
	\]
	As a result, $w \geq \sum_{j\in\N_0} T_t^{\gamma,j} fg$.
	On the other hand, we proved that for $\gamma\in(0,1)$, $w_\gamma$ is the minimal solution to \eqref{eq:w11}.
  Hence we have,
 	\[
		w_\gamma = \sum_{j\in\N_0} T_t^{\gamma,j} fg, \quad \gamma\in(0,1).	
	\]
	Taking $\gamma\to1_{-}$, we obtain $w = \sum_{j\in\N_0} T_t^{1,j} fg$.
Hence $w$ is the minimal solution to \eqref{eq:w11}.

Let now $f,g$ be (possibly) unbounded. Denote 
\[
	f_m=\min\{f, m\},\quad g_m=\min\{g, m\},\quad w_m(x,t) =\T_t(\wc{f\!}_m\wc{g\!}_m)(\{x\}),\quad  m\in\N.
\]
Since		
\[
	\sum_{j=0}^{n} (T_t^{\gamma,j} f_m g_m) \leq \sum_{j=0}^{n} (T_t^{\gamma,j} f g) \leq \sum_{j=0}^{n} (T_t^{\gamma,j} f g) + (T_t^{\gamma,n+1} w),
\]
then
\[
	w_m(x,t)  = \sum_{j\in\N_0} (T_t^{\gamma,j} f_m g_m)(x,t) \leq \sum_{j\in\N_0} (T_t^{\gamma,j} f g)(x,t) \leq w(x,t). 
\]
On the other hand, by the monotone convergence theorem, for $m\to\infty$,
\[
	w_m(x,t) = \E[\{x\}][\wc{f\!}_m(\X_t)\wc{g\!}_m(\X_t)] \to  \E[\{x\}][\wc{f}(\X_t)\wc{g}(\X_t)] = w(x,t).
\]
As a result,
\[
	\sum_{j\in\N_0} (T_t^{\gamma,j} f g)(x,t) = w(x,t),
\]
and, as before, $w$ is the minimal solution to \eqref{eq:w}. In the same way one can show the $v$ is the minimal solution to \eqref{eq:integral_eq_of_exp_semigroup}.
The proof is fulfilled.
\end{proof}

\section{Some properties of solutions to the S-equation}

Let $\X$ be a $(X^0,\pi)$-branching Markov process given by Definition~\ref{defn:X0_pi_BMP} with the corresponding semi-group $\T$ given by \eqref{eq:MP_semigroup}. 
Assumption~\ref{assumm:doesnt_explode} and Theorem~\ref{thm:BMP_and_PDE} imply the comparison principle for the S-equation: 
\begin{prop}\label{prop:comparison}
	Let Assumption~\ref{assumm:doesnt_explode} hold and $\X$ be a $(X^0,\pi)$-branching Markov process. Suppose 
	\[ 
		0\leq f_1(x) \leq f_2(x) \leq 1,\ x\in\R, \quad   f_1, f_2 \in \B(\R),
	\]
	and $u_1$, $u_2$ be the minimal solutions to the S-equation given by Theorem~\ref{thm:BMP_and_PDE} with the initial conditions $f_1$ and $f_2$ correspondingly. Then 
	\[
		0 \leq u_1(x,t) \leq u_2(x,t) \leq 1,\quad x\in\R,\ t\geq0.
	\]
\end{prop}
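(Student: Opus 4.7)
My plan is to exploit the probabilistic representation of the minimal non-negative solution furnished by Theorem~\ref{thm:BMP_and_PDE}: under Assumption~\ref{assumm:doesnt_explode} one has the identification
\[
  u_i(x,t) = \E[\{x\}][\wh{f_i}(\X_t)], \quad x\in\R,\ t\geq 0,\ i\in\{1,2\},
\]
so the claimed chain of inequalities reduces to a pointwise monotonicity statement on the configuration space $\wh{\bR}$ followed by taking expectations.

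The only substantive step is to observe that the ``hat'' operation \eqref{eq:hat} is monotone and $[0,1]$-valued whenever its input is. Indeed, extending $f_1,f_2$ to $\Rc$ by the convention $f_i(\infty)=0$, the hypothesis $0 \leq f_1 \leq f_2 \leq 1$ gives on each component $\Rs^n$
\[
  0 \;\leq\; f_1(x_1)\cdots f_1(x_n) \;\leq\; f_2(x_1)\cdots f_2(x_n) \;\leq\; 1,
\]
while on the two extra points $\varnothing$ and $\iD$ the values of $\wh{f_1}$ and $\wh{f_2}$ coincide (being $1$ and $0$ respectively). Hence $0 \leq \wh{f_1}(\x) \leq \wh{f_2}(\x) \leq 1$ for every $\x\in\wh{\bR}$. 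Taking expectations in this pointwise bound and invoking the representation above yields the chain $0 \leq u_1(x,t) \leq u_2(x,t) \leq 1$.

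There is no real obstacle here: the comparison principle is a direct corollary of the Feynman--Kac-type formula of Theorem~\ref{thm:BMP_and_PDE}, the crucial point being that the nonlinearity encoded in $\wh{\cdot}$ is multiplicative (hence monotone) in its input, and that Assumption~\ref{assumm:doesnt_explode} rules out mass loss to $\iD$ when starting from a one-point configuration in $\R$. A purely analytic alternative, which bypasses probability altogether, is to iterate the Picard scheme $u_n^{(i)}(x,t) = T_t^0 f_i(x) + (\bK \wh{u_{n-1}^{(i)}})(x,t)$ with $u_0^{(i)}\equiv 0$, verify $0 \leq u_n^{(1)} \leq u_n^{(2)} \leq 1$ by induction on $n$ (using that both $\bK$ and $\wh{\cdot}$ preserve the pointwise order on $[0,1]$-valued functions, and that $u \equiv 1$ solves the S-equation with data $f \equiv 1$ by Assumption~\ref{assumm:doesnt_explode}), and pass to the limit $n\to\infty$ via Theorem~\ref{thm:BMP_and_PDE}; this adds bookkeeping without changing the underlying idea.
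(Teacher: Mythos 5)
Your proof is correct and follows essentially the same route as the paper, which derives the comparison principle directly from the probabilistic representation $u_i(x,t)=\E[\{x\}][\wh{f_i}(\X_t)]$ of Theorem~\ref{thm:BMP_and_PDE} together with the pointwise monotonicity of the hat operation on $[0,1]$-valued data. The Picard-iteration alternative you sketch is a valid but unnecessary detour; the paper treats the statement as an immediate corollary, exactly as in your main argument.
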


For the shift operator on $\R$ we will write $S_y(x) = x+y$, $x,y\in\R$. For $\x\in\bR$, $y\in\R$, with abuse of notations we denote 
	\[
		S_y(\x)=\x+y:=\wh{S_y}(\x),
	\]
where we put $S_y(\varnothing) := \varnothing$.

\begin{defn}\label{defn:spatially_homogeneous}
We call a Markov process spatially homogeneous if its semi-group $\T$ commutes with the shifts of space, namely,
\begin{equation}\label{eq:space_homogeneous}
	\T_t S_y f(\x) = S_y \T_t f(\x), \quad \x\in\bR,\ y\in\R,\ t\geq0,\ f\in\B(\bR).
\end{equation}
\end{defn}
This means in particular that trajectories started from $\X_0=\{y\}$, $y\in\R$, coincide with trajectories started from $\X_0 =\{0\}$, $0\in\R$, and shifted by $y$.

By \eqref{eq:non-branching_part} and item \ref{assum:branching_law} of Definition~\ref{defn:X0_pi_BMP}, if a $(X^0,\pi)$-branching Markov process is spatially homogeneous then its non-branching part $X_0$ and branching law $\pi$ are also spatially homogeneous, namely,
\[
	T_t^0 S_y f(x) = S_y T_t^0 f(x),\qquad \pi(x,E) = \pi(0,-x+E).
\]
\begin{prop}\label{prop:u_is_BUC}
	Let Assumption~\ref{assumm:doesnt_explode} hold and $\X$ be a spatially homogeneous $(X^0,\pi)$-branching Markov process.
Assume that $f \in \B(\R)$ be uniformly continuous and $0\leq f\leq1$.
Then the minimal solution $u(x,t)$ to the S-equation with the initial condition $f$ given by Theorem~\ref{thm:BMP_and_PDE} is a uniformly continuous function jointly in $(x,t)$ on $\R\times\R_+$.
\end{prop}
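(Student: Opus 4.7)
The plan is to use the probabilistic representation $u(x,t)=\E[\{x\}][\wh{f}(\X_t)]=\E[\{0\}][\wh{f}(\X_t+x)]$ afforded by Theorem~\ref{thm:BMP_and_PDE} together with the spatial homogeneity hypothesis, and to control the spatial and temporal moduli of continuity of $u$ separately.

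For the spatial part I would use the elementary bound $\bigl|\prod_{i=1}^{n}a_i-\prod_{i=1}^{n}b_i\bigr|\le\sum_{i=1}^{n}|a_i-b_i|$, valid for $a_i,b_i\in[0,1]$, applied to the two products defining $\wh{f}(\X_t+x+h)$ and $\wh{f}(\X_t+x)$. Writing $\omega_f$ for the modulus of continuity of $f$ and combining with the trivial bound $|\wh{f}(\x+h)-\wh{f}(\x)|\le 1$ one obtains
\[
|u(x+h,t)-u(x,t)|\,\le\,\E[\{0\}][\min\bigl(1,\,|\X_t|\,\omega_f(|h|)\bigr)],
\]
which is already independent of $x$ by homogeneity. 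Assumption~\ref{assumm:doesnt_explode} combined with Lemma~\ref{lem:isnt_extinct_is_necessary} gives $|\X_t|<\infty$ almost surely, and truncating at any level $N$ yields $|u(x+h,t)-u(x,t)|\le N\omega_f(|h|)+2\,\P[\{0\}][|\X_t|>N]$, from which spatial continuity at each fixed $t$ follows by first choosing $N$ large and then $h$ small.

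For the temporal part I would apply the Markov property together with the branching identity \eqref{eq:branching_property} to rewrite $u(x,t+s)=\T_s(\T_t\wh f)(\{x\})=\T_s\wwh{u(\cdot,t)}(\{x\})=\E[\{x\}][\wwh{u(\cdot,t)}(\X_s)]$. Right-continuity of the process $\X$ at $s=0^{+}$, combined with the spatial continuity of $u(\cdot,t)$ established above and dominated convergence, then delivers right-continuity of $u(x,\cdot)$ at $t$; joint continuity follows readily, and a symmetric argument conditioning at $t-s$ handles the left-limit.

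The hard part, as I see it, is promoting these moduli from pointwise-in-$t$ estimates to moduli of continuity that are genuinely uniform in $t\in\R_+$, since the tail $\P[\{0\}][|\X_t|>N]$ may grow in $t$ along a supercritical branching process. To handle this I would fall back on the Picard iteration $u_0\equiv 0$, $u_{n+1}=T_t^0 f+\bK\wh{u}_n$ of Theorem~\ref{thm:BMP_and_PDE}, proving inductively that each $u_n$ lies in $\BUC(\R\times\R_+)$ with a jointly uniform modulus controlled via strong continuity of the L\'{e}vy-type semigroup $T_t^0$ on $\BUC(\R)$ (ensuring the first summand is jointly BUC and decays as $t\to\infty$) and the $[0,1]$-preserving, monotone character of $\bK$. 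The monotone limit $u=\lim_n u_n$ then inherits joint uniform continuity from the equicontinuous family $\{u_n\}$ together with the uniform bound $0\le u\le 1$.
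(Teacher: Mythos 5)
Your spatial estimate is essentially the one the paper uses: the telescoping bound $|\wh f(\x+h)-\wh f(\x)|\le |\x|\,\|S_h f-f\|$ followed by truncation (the paper truncates on the number of branching times, $\{\tau_n\le T\}$, and invokes Lemma~\ref{lem:isnt_extinct_is_necessary}, i.e.\ $\P[\{x\}][\tau_*<\infty]=0$, rather than on $\{|\X_t|>N\}$, but the mechanism is the same). The two places where your proposal has genuine gaps are the temporal modulus and the final uniformization step. For time, you write $u(x,t+s)=\T_s\wwh{u(\cdot,t)}(\{x\})$ and let $s\to0^+$ using the spatial continuity of $u(\cdot,t)$; this only gives pointwise continuity in $t$, with a rate governed by the spatial modulus of $u(\cdot,t)$ — exactly the quantity whose uniformity in $t$ you have not established, so the argument is circular at the point where uniformity is needed. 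The paper applies the semigroup factorization in the opposite order: $u(x,t+\dt)-u(x,t)=\vrt{\T_t(\T_{\dt}\wh f-\wh f)}{\R}(x)$, so that the contractivity of $\T_t$ gives $|u(x,t+\dt)-u(x,t)|\le\|\T_{\dt}\wh f-\wh f\|$, a bound that is automatically uniform in both $x$ and $t$; the factor $\|\T_{\dt}\wh f-\wh f\|\to0$ is then obtained by splitting on $\{\tau>\dt\}$ (a single particle moving as the right-continuous $X^0$, handled by dominated convergence) and $\{\tau\le\dt\}$ (probability tending to $0$). This one-line reordering is the key idea your sketch is missing.

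The proposed repair of the uniformity problem via the Picard iterates $u_n$ does not close the gap as stated. You assert that the family $\{u_n\}$ is equicontinuous with a common modulus because $\bK$ is monotone and $[0,1]$-preserving, but monotonicity and boundedness do not control a modulus of continuity: $\bK\wh u_n$ integrates $\wh{u}_n(\z,\cdot)$ over configurations $\z\in\Rs^m$ with $m$ unbounded, and the natural estimate $|\wh u_n(\z+h,s)-\wh u_n(\z,s)|\le m\,\omega_{u_n}(|h|)$ degrades by the (unbounded) offspring number at every iteration, so the inductive moduli need not be summable or even bounded. Without an additional argument — e.g.\ re-truncating inside the iteration using $\P[\{x\}][\tau_*<\infty]=0$, which is effectively what the paper's direct probabilistic truncation accomplishes — the claimed equicontinuity is equivalent to the statement you are trying to prove. (For what it is worth, the paper's own passage from $t\in[0,T]$ to $t\in\R_+$ in the spatial direction is also terse, so you are right to flag uniformity in $t$ as the delicate point; but your proposed mechanism does not resolve it.)
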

\begin{proof}
	By Definition~\ref{defn:BMP}, $\X$ is right-continuous, hence so is $X^0$. Thus, since we assumed that $0\leq f \leq 1$ is uniformly continuous, then
\begin{align*}
	\lim_{t\to0_+} \esssup_{x\in\R} |f(x+X^0_t)-f(x)| =0,& \quad \mathbb{P}^0_{0}-a.s.,\\
	\esssup_{x\in\R} |f(x+X^0_t)-f(x)| \leq 1,& \quad \mathbb{P}^0_{0}-a.s.
\end{align*}
Hence, by \eqref{eq:minimal_sol_to_S_equation}, Definition~\ref{defn:X0_pi_BMP}, and the dominated convergence theorem,
\begin{align*} 
	\|\T_t\wh{f}-\wh{f}\| &\leq \E[\{0\}][\|\wh{f}(\cdot + \X_t)-f(\cdot)\|, \tau>t] + \E[\{0\}][\tau\leq t] \\
		&= \mathbb{E}^0_{0}\big[ \|f(\cdot+X^0_t)-f(\cdot)\|] + \E[\{0\}][\tau\leq t] \to 0,\quad t\to0_+; 
\end{align*}
	Then, for $\dx\in\R$, $\dt\in\R_+$,
	\[
		\big|u(x+\dx,t+\dt)-u(x+\dx,t)\big| \leq \|\T_t (\T_{\dt}\wh{f}-\wh{f})\| \leq \|\T_{\dt}\wh{f}-\wh{f}\| \to 0,\quad \dt\to0.  
	\]
	Next, note that for $\x=\{x_1,x_2,\dots,x_n\}$, $\x'=\{x_1,x_2,\dots,x_{n-1}\} $, 
	\begin{align*}
		|\wh{f}(\x+\dx) - \wh{f}(\x)| &= \wh{f}(\x'+\dx) |f(x_n+\dx)-f(x_n)| + f(x_n) |\wh{f}(\x'+\dx)-\wh{f}(\x')|\\
			&\leq \|f\|^{n-1}\|S_{\dx}f-f\| + |\wh{f}(\x'+\dx)-\wh{f}(\x')|.
	\end{align*}
	Therefore, by induction,
	\begin{align*}
		\big| \wh{f}(\x+h)-\wh{f}(\x)\big| \leq (1+\|f\| +\dots \|f\|^{n-1}) \|S_{\dx}f-f\| \leq n \|S_{\dx}f-f\|.
	\end{align*}
	Hence, c.f. \eqref{eq:explosion_and_first_branching}, \eqref{eq:space_homogeneous},
	\begin{align*}
		u(x+\dx,t)-&u(x,t) = \E[\{0\}][S_{\dx}\wh{f}(x+\X_t)-\wh{f}(x+\X_t)]\\
			&\leq \E[\{0\}][S_{\dx}\wh{f}(x+\X_t)-\wh{f}(x+\X_t),\, \tau_n > T] + 2\E[\{0\}][\tau_n \leq T]\\ 
			&\leq \frac{n(n+1)}{2} \|S_{\dx}f-f\|\,\E[\{0\}][\tau_n>T] + 2\E[\{0\}][\tau_n \leq T]. 
	\end{align*}
	Since $f$ is uniformly continuous, then by Lemma~\ref{lem:isnt_extinct_is_necessary}, taking first $\dx\to0$, and next $n\to\infty$, we obtain uniform in $(x,t)$ convergence to $0$. 
	As a result, $u(x+\dx,t+\dt)-u(x,t)$ converges to $0$ uniformly in $(x,t)\in\R\times\R_+$, as $\dx\to0$, $\dt\to0_+$. The proof is fulfilled. 

\end{proof}

\section{On the branching random walk and its relation to the branching Markov process}

To describe the branching random walk on the real line we will follow Shi \cite{Shi2015}. 
We assume that an initial ancestor is located at a point $x\in\R$. 
Its children, who form the first generation, are scattered in $\R$ according to the distribution of the point process $x+\Xi$.
Each of the particles (also called individuals) in the first generation produces
its own children who are thus in the second generation and are positioned (with
respect to their parent) according to the same distribution of $\Xi$.
The system goes on indefinitely, but can possibly die if there is no particle at a generation.
Each individual in the $n$-th generation reproduces independently of
each other and members of earlier generations.

\begin{lem}\label{lem:from_BMP_to_BRW}
  Let $\X$ be a spatially homogeneous $(X^0,\pi)$-branching Markov process.
  Then its sampling $(\X_n):=\{\X_n\}_{n\in\N_0}$ is a branching random walk.	
	Moreover, if $\X$ started from a point $x\in\R$, then the corresponding $(\X_n)$ is a branching random walk on the real line $\R$.  
\end{lem}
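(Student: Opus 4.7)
The plan is to verify directly the two defining features of a branching random walk --- (i) each particle's children are placed relative to their parent according to a common point-process law $\Xi$, and (ii) distinct particles reproduce independently --- by combining the Markov property of $\X$ at integer times, the branching property \eqref{eq:branching_property}, and spatial homogeneity \eqref{eq:space_homogeneous}.

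I would first identify the candidate displacement law. Let $\Xi$ denote the law of $\X_1$ started from $\X_0=\{0\}$, regarded as a random configuration in $\bRc$. Given $y\in\R$ and $g\in\B(\Rc)$ with $\|g\|\le 1$, spatial homogeneity yields
\[
\E[\{y\}][\wh{g}(\X_1)] = \T_1\wh{g}(\{y\}) = \T_1 S_y\wh{g}(\{0\}) = \E[\{0\}][\wh{g}(y+\X_1)],
\]
so $\X_1$ started from $\{y\}$ has the same law as $y+\Xi$. This fixes the candidate reproduction law.

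The heart of the argument is the independent-reproduction structure at each time step. By the Markov property applied at integer times, the conditional law of $\X_{n+1}$ given $\X_0,\dots,\X_n$ equals the law of $\X_1$ started from $\X_n$. On $\{\X_n=\{y_1,\dots,y_k\}\}$, the branching property \eqref{eq:branching_property} gives, for $g\in\B(\Rc)$ with $\|g\|<1$,
\[
\T_1\wh{g}(\{y_1,\dots,y_k\}) = \prod_{j=1}^k \T_1\wh{g}(\{y_j\}) = \prod_{j=1}^k \E[\{y_j\}][\wh{g}(\X_1)],
\]
and a monotone-convergence argument extends this to $0\le g\le 1$. Combined with the previous paragraph, the right-hand side equals the exponential functional of the superposition $\bigsqcup_{j=1}^k(y_j+\Xi^{(j)})$ of $k$ i.i.d.\ copies of $\Xi$. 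Since the class $\{\wh{g}:g\in\B(\Rc),\ 0\le g\le 1\}$ separates the laws of finite random configurations on $\bRc$, this identifies the conditional distribution of $\X_{n+1}$ given $\X_n$ as one step of a BRW with reproduction law $\Xi$, establishing (i) and (ii) simultaneously. Iterating over $n$ yields the first assertion.

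For the second statement I would invoke item~\ref{assum:inftyinfty} of Definition~\ref{defn:X0_pi_BMP}: starting from $\X_0=\{x\}$ with $x\in\R$, the process $\X$ almost surely never contains $\infty\in\Rc$, so each $\X_n$ lies in $\bR\cup\{\iD\}$. Since a BRW is explicitly allowed to die out, identifying the cemetery state $\iD$ with extinction shows that $(\X_n)$ is a BRW on $\R$. The main subtlety I anticipate is the point-process identification step --- checking that agreement of all exponential functionals $g\mapsto\E[\wh{g}(\X_1)]$ determines the law of $\X_1$ as a random configuration rather than merely its one-dimensional marginals. This is standard (indicators of measurable rectangles in $\bRc$ can be approximated by products $\wh{g}$ via a monotone-class argument), but it is the step that needs explicit care in the full proof.
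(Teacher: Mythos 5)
Your proposal is correct and follows essentially the same route as the paper: Markov property at integer times, the branching property \eqref{eq:branching_property} to factor $\T_1\wh{g}$ over the points of the current configuration, and spatial homogeneity to identify each offspring law as a translate of the law of $\X_1$ started from $\{0\}$; the second assertion is handled identically via item~\ref{assum:inftyinfty} of Definition~\ref{defn:X0_pi_BMP}. The only cosmetic difference is at the identification step you rightly flag as delicate: you propose a separating-class/monotone-class argument showing that the functionals $g\mapsto\E[\wh{g}(\X_1)]$ determine the law of the configuration, whereas the paper instead invokes \cite[Lemma~0.2]{INW1968i} to extend the multiplicative identity from $\wh{f}$ to arbitrary continuous functions vanishing at infinity.
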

\begin{proof}
	Since $\X$ is a Markov process then $(\X_n)$ is a Markov chain.
	The branching property of $(\X_n)$ follows form the branching property of $\X$, namely, by \eqref{eq:branching_property}, 
	\begin{equation}\label{eq:branching_everywhere}
		\T_n\wh{f}(\x) = \wh{(\T_n\wh{f})\big\vert_{\wh{\R}}}(\x).
	\end{equation}
where $f\in\B(\Rc),\ \|f\|<1$.
By \cite[Lemma~0.2]{INW1968i}, in \eqref{eq:branching_everywhere} one can substitute $\wh{f}$ by any continuous function $g:\bR\to\R$, vanishing at infinity (as $\x\to\iD$).
For example, by the spatial homogeneity of $\X$, \eqref{eq:branching_everywhere} implies
\[
	\E[\x][g(\X_1)] = \prod_{x_j\in\x} \E[\{x_j\}][g(\X_1)] = \prod_{x_j\in\x} \E[\{0\}][g(x_j+\X_1)].
\]
This means, that any point $x_j$ in the starting generation produces its own children distributed as $x_j+\X_1$, which are positioned independently of the children of other parents from $\x$.
By induction, it holds for any generation $n\in\N$.
Thus $(\X_n)$ is a branching random walk.
The last statement of the theorem follows from the item \eqref{assum:inftyinfty} of Definition~\ref{defn:X0_pi_BMP}.
The proof is fulfilled.
\end{proof}

\begin{rem}\label{rem:on_GW}
	The idea of Lemma~\ref{lem:from_BMP_to_BRW} is standard in the probabilistic literature. For example, if particles of the $(X^0,\pi)$-branching Markov process $\X$ don't move (see \ref{item:X1}), then the corresponding sampling $(\X_n)$ is the Galton-Watson process. See e.g. \cite[p.\,60]{Sev1951} or \cite[p.\,110]{AN1972}.	
\end{rem}

Let us remind that we denote 
	\[
		\el(y) := e^{- \la y},\quad y\in\R,\ \la\in\R,
	\]
	and the corresponding  expectation semi-group $v_\la$ is defined by \eqref{eq:Laplace_tranform}. 
Then the log-Laplace transform of the point process $\X_1$ is defined by \eqref{eq:log-Laplace}. 

Now we will formulate results on the position of the left-most particle of a branching random walk.

Assume,
\begin{align}
	&\E[\{0\}][\wc{g}_{\!\!1}(\X_1)] = 0, & \quad &\E[\{0\}][\wc{g}_{\!\!2}(\X_1)]<\infty, \label{assum:H1} \tag{H1}\\
	&\E[\{0\}][\wc{g}_{\!\!3}(\X_1) (\ln_+ \wc{g}_{\!\!3}(\X_1))^2]<\infty, & \quad &\E[\{0\}][\wc{g}_{\!\!4}(\X_1)\ln_+ \wc{g}_{\!\!4}(\X_1)]<\infty, \label{assum:H2} \tag{H2}
\end{align}
where, $\ln_+x:= \ln \max\{x,1\}$, and for $y\in\R$, 
\begin{align*}
	&h(y)=\la_* y+\psi(\la_*), & \quad &g_1(y)=h(y) e^{-h(y)}, & \quad &g_2(y)=h^2(y) e^{-h(y)},\\ 
	&g_3(y)=e^{-h(y)}, & \quad &g_4(y)=\max\{0,h(y)\}e^{-h(y)}.
\end{align*}
Let $(M_n)$ denote a position of the left-most particle of $(\X_n)$,
\begin{equation}\label{eq:right-most}
	M_n := \min\{x\in\R\,\vert\, x\in\X_n\},\quad n\in\N_0.
\end{equation}
By \cite[Theorem~1.1]{Aid2013} the following theorem holds.
\begin{thm}\label{thm:lim_of_leftmost_of_BRW}
	Under \eqref{assum:supercritical}, \eqref{assum:c_*}, \eqref{assum:H1}, \eqref{assum:H2}, if the distribution of $\X_1$ is non-lattice, then there exists a constant $C_*>0$, such that for any $x\in\R$,
\[
	\lim\limits_{n\to\infty} \E[\{0\}][M_n + c_* n - \frac{3}{2\la_*}\ln n + C_* \geq x] = \E[\{0\}][e^{-e^{\la_* x}D_\infty}],
\]
where $D_\infty$ is the almost sure limit of the derivative martingale 
\begin{equation}\label{eq:derivative_martingale}
	D_n=\wc{g}(\X_n), \quad g(y) = (\la_* y + n \psi(\la_*))e^{-\la_* y-n \psi(\la_*)}.
\end{equation}
	Moreover,
	\begin{equation}\label{eq:derivative_martingale_is_positive}
		\P[\{0\}][D_\infty>0 \, \big| \, (\X_n)\text{ does not extinct}]=1. 
	\end{equation}
\end{thm}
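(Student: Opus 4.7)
The plan is to follow the spine-and-truncated-second-moment strategy that underlies A\"{i}d\'{e}kon's proof. First I would introduce the additive (Biggins) martingale
\[ W_n = e^{-n\psi(\la_*)}\sum_{y\in\X_n} e^{-\la_* y}, \]
which is positive by \eqref{eq:Laplace_tranform} and \eqref{eq:log-Laplace}, and use it (following Lyons) to construct a tilted measure $\hat{\P}$ under which one distinguished line of descent, the \emph{spine}, is singled out and the increments $Y_k$ along it are i.i.d. The identity $c_* = \partial_\la\psi(\la_*)$ from \eqref{assum:sufficient_for_H1_and_H2} makes $S_k := \sum_{j\leq k}(\la_* Y_j + \psi(\la_*))$ a centered random walk, and \eqref{assum:H1} ensures $\hat{\E}[S_1^2]<\infty$. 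A Biggins--Kyprianou type argument exploiting the spine, together with the $L\log L$ and $L(\log L)^2$ conditions in \eqref{assum:H2}, then shows that the derivative martingale $D_n$ in \eqref{eq:derivative_martingale} converges almost surely to a finite limit $D_\infty$, and that $\{D_\infty>0\}$ coincides a.s.\ with the non-extinction event of $(\X_n)$, which is \eqref{eq:derivative_martingale_is_positive}.

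The heart of the proof is a sharp asymptotic for $\P[\{0\}][M_n + c_*n - \tfrac{3}{2\la_*}\ln n \geq -x - C_*]$. Via the many-to-one formula this reduces to controlling the expected number of particles at generation $n$ whose rescaled position $\la_* y + n\psi(\la_*)$ lies in a unit window around $-\la_* x - \tfrac{3}{2}\ln n$, subject to the constraint that the entire ancestral trajectory $S_k$ stays above a suitable barrier. Classical ballot-type fluctuation estimates for centered random walks conditioned to stay positive yield
\[
\hat{\P}\bigl[S_k\geq -a \text{ for all } k\leq n;\ S_n\in[b,b+1]\bigr] \sim \frac{C(a+1)(b+1)}{n^{3/2}},
\]
and it is this $n^{-3/2}$ factor that, once combined with the branching factor $e^{n\psi(\la_*)}$, produces the correction $\tfrac{3}{2\la_*}\ln n$ and fixes the universal constant $C_*$.

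To upgrade the first-moment computation to a genuine distributional limit I would run a truncated second-moment argument. Let $Z_n(x)$ be the number of particles at generation $n$ whose ancestral trajectory stays above the barrier and ends in the target window; compute $\hat{\E}[Z_n(x)]$ via the estimate above, and bound $\hat{\E}[Z_n(x)^2]$ by splitting on the generation $k$ of the most recent common ancestor of two tagged particles. Combined with Paley--Zygmund, tightness, and conditioning on the $\sigma$-algebra $\mathcal{F}_k$ generated by the first $k$ generations, this yields convergence of $\P[\{0\}][M_n + c_*n - \tfrac{3}{2\la_*}\ln n + C_* \geq x \mid \mathcal{F}_k]$ as $n\to\infty$ and then $k\to\infty$; the branching recursion and the a.s.\ convergence of $D_n$ from Step~1 identify the limit as $\E[\{0\}][\exp(-e^{\la_* x}D_\infty)]$.

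The main obstacle is this second-moment estimate. Pairs of particles whose most recent common ancestor is at an intermediate generation produce strongly correlated trajectories, and controlling their joint contribution requires both the sharp ballot asymptotics above and a delicate decomposition at the common ancestor. The moment conditions packaged in \eqref{assum:H1} and \eqref{assum:H2} are calibrated exactly to make the second moment comparable to the square of the first (Chen \cite{Che2015} shows they are sharp for non-triviality of the limit); without them the second moment is infinite and the approach breaks down.
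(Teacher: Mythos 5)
The first thing to say is that the paper does not prove this theorem at all: it is exactly \cite[Theorem~1.1]{Aid2013} after the affine renormalization $y\mapsto \la_* y+n\psi(\la_*)$ that reduces the general branching random walk to the ``boundary case'' $\psi(1)=\psi'(1)=0$, and the paper's entire ``proof'' is the sentence ``By \cite[Theorem~1.1]{Aid2013} the following theorem holds.'' What you have written is therefore not an alternative to the paper's argument but a summary of the argument inside the cited reference. As a summary it is faithful: the spine/tilted-measure construction, the reduction of \eqref{assum:H1} to the centering and finite variance of the spine walk, the role of the $L\log L$ and $L(\log L)^2$ conditions in \eqref{assum:H2} for the convergence and non-degeneracy of $D_\infty$ (with \eqref{eq:derivative_martingale_is_positive} and the sharpness result of \cite{Che2015}), the ballot-type estimate producing the $n^{-3/2}$ and hence the $\tfrac{3}{2\la_*}\ln n$ correction, and the truncated second-moment argument with conditioning on $\mathcal{F}_k$ to identify the limit as $\E[\{0\}][e^{-e^{\la_* x}D_\infty}]$ --- this is precisely the architecture of \cite{Aid2013} (and of the shorter proof in \cite{BDZ2016}).

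However, judged as a proof rather than a roadmap, your proposal has genuine gaps: every quantitatively hard ingredient is asserted as a goal rather than established. The local ballot asymptotic with the stated constant, the bound on $\hat{\E}[Z_n(x)^2]$ after decomposing at the most recent common ancestor (which you yourself flag as ``the main obstacle''), the tightness of $M_n+c_*n-\tfrac{3}{2\la_*}\ln n$, and the identification of the constant $C_*$ are each multi-page arguments in \cite{Aid2013}; none is carried out here. One smaller inaccuracy: you invoke $c_*=\partial_\la\psi(\la_*)$ ``from \eqref{assum:sufficient_for_H1_and_H2}'', but the theorem's hypotheses are only \eqref{assum:supercritical}, \eqref{assum:c_*}, \eqref{assum:H1}, \eqref{assum:H2}; the centering of the spine walk is exactly the first condition in \eqref{assum:H1} (namely $\E[\{0\}][\wc{g}_{\!\!1}(\X_1)]=0$ with $g_1(y)=h(y)e^{-h(y)}$), so \eqref{assum:sufficient_for_H1_and_H2} should not be needed here --- in this paper it only enters later, in Lemma~\ref{lem:sufficient_for_H1_and_H2}, as a checkable sufficient condition for \eqref{assum:H1}--\eqref{assum:H2}. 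Given that the paper treats this statement as an external black box, the appropriate ``proof'' is the citation together with the change of variables; if you intend to reprove Aïdékon's theorem, the second-moment and ballot estimates must actually be supplied.
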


The assumptions \eqref{assum:H1} and \eqref{assum:H2} are difficult to check in general.
Therefore we introduced the sufficient condition \eqref{assum:sufficient_for_H1_and_H2}.
In order to check that \eqref{assum:sufficient_for_H1_and_H2} implies \eqref{assum:H1} and \eqref{assum:H2} we need the following lemma.
\begin{lem}\label{lem:Laplace_is_time-multiplicative}
	Let $\X$ be a $(X^0,\pi)$-branching Markov process satisfying Assumption~\ref{assumm:doesnt_explode}. 
	\begin{enumerate}
		\item If for fixed $\la\in\R$, $t>0$, the function $v_\la$ given by \eqref{eq:Laplace_tranform} is such that $v_\la(0,t)<\infty$ and $v_0(0,t)<\infty$, then 
	\begin{equation}\label{eq:Laplace_is_time_multiplicative}
		v_{\la}(0,t) = v_{\la}(0,t-s)v_\la(0,s), \quad s\in[0,t].
	\end{equation}
	In particular $v_{\la}(0,s)<\infty$, $s\in[0,t]$.
\item If for fixed $\la,\mu\in\R$, $t>0$, the function $w_{\la,\mu}$ given by \eqref{eq:w_la_mu} is such that $w_{\la,\mu}(0,t)<\infty$, then 
  	\begin{equation}\label{eq:w_leq_v_w}
  		w_{\la,\mu}(0,s) v_{\la+\mu}(0,t-s) \leq w_{\la,\mu}(0,t),  \quad s\in[0,t].
  	\end{equation}
  	In particular $v_{\la+\mu}(0,s)<\infty$, $w_{\la,\mu}(0,s)<\infty$, $s\in[0,t]$.
	\end{enumerate}
\end{lem}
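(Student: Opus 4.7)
The plan is to deduce both statements from the semigroup relation $\T_t=\T_s\T_{t-s}$ combined with Corollary~\ref{cor:nice_formulas}, which tells us how $\T_{t-s}$ acts on the sum-functions $\wc{e\!}_\la$ and $\wc{e\!}_\la\wc{e\!}_\mu$, and then to use the spatial-homogeneity identities
\[
v_\la(x,r)=e^{-\la x}v_\la(0,r),\qquad w_{\la,\mu}(x,r)=e^{-(\la+\mu)x}w_{\la,\mu}(0,r),\qquad x\in\R,\ r\geq 0,
\]
which follow at once from the distributional equality $\X_r$ started at $\{x\}$ equals $x+\X_r$ started at $\{0\}$, to pull the $x$-dependence through the outer expectation.

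For part (i), I would write $v_\la(0,t)=\bigl(\T_s(\T_{t-s}\wc{e\!}_\la)\bigr)(\{0\})$. By \eqref{eq:nice_formula}, the restriction of $\T_{t-s}\wc{e\!}_\la$ to $\bR$ is the ``check''-extension of $v_\la(\cdot,t-s)$, so
\[
v_\la(0,t)=\E[\{0\}]\Bigl[\sum_{x\in\X_s}v_\la(x,t-s)\Bigr]=v_\la(0,t-s)\,\E[\{0\}]\Bigl[\sum_{x\in\X_s}e^{-\la x}\Bigr]=v_\la(0,t-s)\,v_\la(0,s),
\]
which is \eqref{eq:Laplace_is_time_multiplicative}. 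Finiteness of $v_\la(0,s)$ for every $s\in[0,t]$ then follows from the non-negativity of each factor together with the hypothesis $v_0(0,t)<\infty$, which (via the same identity applied with $\la=0$) prevents the degenerate $0\cdot\infty$ scenario from occurring.

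For part (ii), the same decomposition together with \eqref{eq:another_nice_formula} specialized to $f=e_\la$, $g=e_\mu$ yields
\[
w_{\la,\mu}(0,t)=\E[\{0\}]\Bigl[\sum_{x\in\X_s}w_{\la,\mu}(x,t-s)\Bigr]+\E[\{0\}]\Bigl[\wc{v\!}_\la(\X_s)\wc{v\!}_\mu(\X_s)-\wwc{v_\la v_\mu}(\X_s)\Bigr].
\]
The second expectation, evaluated on $\X_s=\{x_1,\dots,x_n\}$, equals the non-negative cross-term $\sum_{i\neq j}v_\la(x_i,t-s)v_\mu(x_j,t-s)$ and so can be discarded. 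Substituting the homogeneity formula for $w_{\la,\mu}(\cdot,t-s)$ into the first expectation produces
\[
w_{\la,\mu}(0,t)\geq w_{\la,\mu}(0,t-s)\,v_{\la+\mu}(0,s),
\]
and relabelling $s\leftrightarrow t-s$ gives \eqref{eq:w_leq_v_w}. Finiteness of $v_{\la+\mu}(0,s)$ is immediate from the elementary bound $v_{\la+\mu}(0,s)\leq w_{\la,\mu}(0,s)$ (the diagonal of the double sum), and finiteness of $w_{\la,\mu}(0,s)$ follows by rearranging \eqref{eq:w_leq_v_w}. I do not expect a genuine obstacle here: the only delicate point is notational bookkeeping---keeping the hat and check extensions straight when applying Corollary~\ref{cor:nice_formulas}, and noticing that the off-diagonal cross-term in \eqref{eq:another_nice_formula} is non-negative, which is precisely why an inequality rather than an equality arises in (ii).
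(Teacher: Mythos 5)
Your argument is correct and follows essentially the same route as the paper: split at an intermediate time via the Markov property, apply the branching identities \eqref{eq:nice_formula}--\eqref{eq:another_nice_formula} together with spatial homogeneity, and discard the non-negative off-diagonal cross term to get the inequality in (ii). The only difference is that the paper justifies applying these identities to the unbounded functions $\el$ by explicit monotone truncations ($e_{\la,n}$ from above and the two-sided $e_{\la,n,m}$), which is why it proves the equality in (i) as two separate inequalities; your direct manipulation is legitimate because Corollary~\ref{cor:nice_formulas} is stated for unbounded non-negative $f,g$ and the semigroup identity extends to non-negative functionals by monotone convergence, but that extension is exactly what the paper's truncation bookkeeping supplies and deserves an explicit word.
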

\begin{proof}
	Denote
	\[
		e_{\la,n}(y):= \min\{n, e^{- \la y}\}.
	\] 
	First, we note that $v_{\la}(0,t)<\infty$ implies $v_{\la}(0,s)<\infty$, $s\in[0,t]$.
	Indeed, by the Markov property of $\X$,
	\begin{align*}
		\E[\{0\}][\wc{e_{\la,n^2}}(\X_{t})] &= \E[\{0\}][\E[\{0\}][\wc{e_{\la,n^2}}(\X_{t}) \big\vert\, \X_{t-s}]] = \E[\{0\}][\E[\X_{t-s}][\wc{e_{\la,n^2}}(\X_{t})]],\\ 
	\end{align*}
	by \eqref{eq:nice_formula} (c.f. \eqref{eq:check}) and spatial homogeneity of $\X$ (see Definition~\ref{defn:spatially_homogeneous}) we continue
	\begin{align*}
		&=\E[\{0\}][\wwc{ \vrt{(\T_s\wc{e_{\la,n^2}}\,)}{\R}}(\X_{t-s})]  = \E[\{0\}][\sum_{\xi_j\in\X_{t-s}}\!\! \E[\{0\}][\wc{e_{\lambda,n^2}}(\xi_j+\X_s)]] \\ 
		&\geq \E[\{0\}][\sum_{\xi_j\in\X_{t-s}}\!\! e_{\la,n}(\xi_j)\,\E[\{0\}][\wc{e_{\lambda,n}}(\X_s)]] = \E[\{0\}][\wc{e_{\lambda,n}}( \X_{t-s})]\E[\{0\}][\wc{e_{\lambda,n}}( \X_s)], 
\end{align*}
where we applied the inequality $\min\{n^2, e^{x+y}\} \geq \min\{n, e^x\} \min\{n, e^y\}$, $x,y\in\R$. 
	Taking $n\to\infty$, we have 
	\begin{equation}\label{eq:v2_t_leq_v_2t}
		v_{\la}(0,t-s)v_{\la}(0,s) \leq v_{\la}(0,t) < \infty.
	\end{equation}
	In particular $v_0(0,t)<\infty$ implies $v_0(0,s)<\infty$, $s\in[0,t]$.

	Let us prove the opposite inequality.
	Denote
	\[
		e_{\la,n,m}(y):= \max\{\min\{n, \el(y)\}, \frac{1}{m}\}.
	\] 
	Then, for $m\leq n$, $n,m\in\N$, the inequality holds,
	\[
		e_{\la,n,m^2}(x+y) \leq e_{\la,n^2,m}(x) e_{\la,n^2,m}(y).
	\]
	Hence, similar to the previous case, we have for $n\geq m$,
	\begin{align*}
		\E[\{0\}][\wc{e_{\la,n,m^2}}(\X_{t})] \leq \E[\{0\}][\wc{e_{\lambda,n^2,m}}( \X_{t-s})]\E[\{0\}][\wc{e_{\lambda,n^2,m}}( \X_{s})]. 
	\end{align*}
	Taking first $n\to\infty$ and then $m\to\infty$ we have
	\begin{equation}\label{eq:v2_t_geq_v_2t}
		v_{\la}(0,t) \leq v_{\la}(0,s)v_{\la}(0,t-s),
	\end{equation}
	where the limit on the right-hand side is finite by the following estimate,
	\[
		\E[\{0\}][\wc{e_{\lambda,n^2,m}}( \X_{s})] \leq \E[\{0\}][\wwc{(e_{\lambda,n^2}+1)}( \X_{s})] \leq v_\la(0,s) + v_0(0,s) <\infty, \ \ s\in[0,t].
	\]  
	By \eqref{eq:v2_t_leq_v_2t} and \eqref{eq:v2_t_geq_v_2t} the equation \eqref{eq:Laplace_is_time_multiplicative} holds.

	Similar to the previous consideration, we have by \eqref{eq:nice_formula} and \eqref{eq:another_nice_formula},
	\begin{align*}
		&\E[\{0\}][\wc{e_{\la,n^2}}(\X_t) \wc{e_{\mu,n^2}}(\X_t)] = \E[\{0\}][(\T_s \wc{e_{\la,n^2}} \wc{e_{\mu,n^2}})(\X_{t-s})] \\ 
		& \geq \E[\{0\}][ \big(\T_s(\wc{e_{\la,n^2}}\wc{e_{\mu,n^2}})\big\vert_{\R}\big) \!\wc{\phantom{)}}(\X_{t-s})] = \E[\{0\}][\sum_{\xi_j\in \X_{t-s}} \E[\{\xi_j\}][\wc{e_{\lambda,n^2}}(\X_s)\wc{e_{\mu,n^2}}(\X_s)]]\\ 
		&= \E[\{0\}][\sum_{\xi_j\in \X_{t-s}} \E[\{0\}][\wc{e_{\lambda,n^2}}(\xi_j+\X_s)\wc{e_{\mu,n^2}}(\xi_j+\X_s)]]\\ 
		&\geq \E[\{0\}][\sum_{\xi_j\in \X_{t-s}} e_{\la,n}(\xi_j)e_{\mu,n}(\xi_j)  \E[\{0\}][\wc{e_{\lambda,n}}(\X_s)\wc{e_{\mu,n}}(\X_s)]]\\
		& =\E[\{0\}][\wc{e_{\la,n}e_{\mu,n}}(\X_{t-s})] \E[\{0\}][\wc{e_{\lambda,n}}(\X_s)\wc{e_{\mu,n}}(\X_s)].
	\end{align*}
	Taking $n\to\infty$, we have
	\[
		w_{\la,\mu}(0,t) \geq v_{\la+\mu}(0,t-s) w_{\la,\mu}(0,s).
	\]
The proof is fulfilled.
\end{proof}

\begin{lem}\label{lem:sufficient_for_H1_and_H2}
	The assumption \eqref{assum:sufficient_for_H1_and_H2} implies \eqref{assum:H1} and \eqref{assum:H2}.
\end{lem}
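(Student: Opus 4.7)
The plan is to verify the four conditions of \eqref{assum:H1}--\eqref{assum:H2} individually, translating each into moment statements involving $v_\la(0,1) = e^{\psi(\la)}$ and $w_{\la,\mu}(0,1)$, and then applying the three finite $w$-values and the identity $c_*=\psi'(\la_*)$ supplied by \eqref{assum:sufficient_for_H1_and_H2}. For the first part of \eqref{assum:H1}, I would differentiate the Laplace transform at $\la_*$. Since $\la_* < \la_0$ and $\delta < \la_0 - \la_*$, both $0$ and $\la_*+\delta$ lie in the effective domain of $\psi$, so $\la_*$ is interior to it; $\psi$ is therefore smooth at $\la_*$ and the standard interchange yields $\E \sum_{y\in \X_1} y e^{-\la_* y} = -\psi'(\la_*) e^{\psi(\la_*)}$. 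A direct computation then gives
\[
  \E \wc{g_1}(\X_1) = e^{-\psi(\la_*)}\bigl(\psi(\la_*) v_{\la_*}(0,1) - \la_* v_{\la_*}'(0,1)\bigr) = \psi(\la_*) - \la_* \psi'(\la_*),
\]
which vanishes upon combining \eqref{assum:c_*} ($\psi(\la_*)=\la_* c_*$) with $\psi'(\la_*)=c_*$ from \eqref{assum:sufficient_for_H1_and_H2}.

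For $\E \wc{g_2}(\X_1) < \infty$, I would split the sum over particles by the sign of $h(y)$. On $\{h(y) \ge 0\}$ the function $h^2 e^{-h}$ is bounded by the universal constant $4/e^2$, so this region contributes at most a multiple of $\E N_1 \leq \sqrt{w_{0,0}(0,1)} < \infty$, where $N_1 = |\X_1|$. On $\{h(y) < 0\}$ the polynomial factor $h(y)^2$ is absorbed by an exponential, $h(y)^2 e^{-h(y)} \le C_\delta e^{-(\la_*+\delta)y}$, yielding in expectation a bound of $C_\delta v_{\la_*+\delta}(0,1) < \infty$, finite because $\la_*+\delta < \la_0$.

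For \eqref{assum:H2}, the elementary inequality $x(\ln_+ x)^2 \le C_\epsilon x^{1+\epsilon}$ reduces the first condition to $\E \wc{g_3}(\X_1)^{1+\epsilon}<\infty$ for some $\epsilon>0$. Writing $\wc{g_3} = e^{-\psi(\la_*)} \wc{e_{\la_*}}$ and invoking the subadditivity $(\sum_i a_i)^\epsilon \le \sum_i a_i^\epsilon$ valid for $\epsilon \in (0,1]$ and $a_i \ge 0$, I obtain $\wc{e_{\la_*}}^{1+\epsilon} \le \wc{e_{\la_*}} \cdot \wc{e_{\epsilon \la_*}}$, whose expectation is $w_{\epsilon \la_*, \la_*}(0,1)$. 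Choosing $\epsilon = \min(1, \delta/\la_*)$ puts $\epsilon \la_* \in [0, \delta]$, and convexity of $\mu \mapsto w_{\mu, \la_*}(0,1)$ on this interval (it is a Laplace transform in $\mu$) bounds it by $\max(w_{0,\la_*}(0,1), w_{\delta,\la_*}(0,1)) < \infty$. The second condition of \eqref{assum:H2} is immediate from the pointwise bound $g_4(y) \le \sup_{h\ge 0} h e^{-h} = 1/e$: then $\wc{g_4}(\X_1) \le N_1/e$, and using $x \ln_+ x \le x^2$ we conclude $\E \wc{g_4}\ln_+\wc{g_4} \le \E N_1 \ln_+ N_1 \le \E N_1^2 = w_{0,0}(0,1) < \infty$. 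The main obstacle is the differentiation step of \eqref{assum:H1}, where interchanging derivative and expectation must be justified through the cumulant-generating-function interpretation of $\psi$; the assumption $\la_* < \la_0$ is precisely what makes this interchange legitimate, after which all remaining estimates are direct majorizations against the three hypothesized $w$-moments.
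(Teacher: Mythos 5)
Your proposal is correct and follows the same overall strategy as the paper: verify the four conditions of \eqref{assum:H1}--\eqref{assum:H2} one by one by majorizing each integrand against the quantities $v_0(0,1)$, $v_{\la_*+\delta}(0,1)$, $w_{0,0}(0,1)$, $w_{0,\la_*}(0,1)$, $w_{\delta,\la_*}(0,1)$ that \eqref{assum:sufficient_for_H1_and_H2} makes finite; the treatments of $\wc{g}_{\!\!1}$, $\wc{g}_{\!\!2}$ and $\wc{g}_{\!\!4}$ coincide with the paper's (your case split on the sign of $h$ is just an unpacked version of the bound $g_2(y)\leq C(1+e^{-(\la_*+\delta)y})$, and your explicit justification of the interchange of $\partial_\la$ and $\E$ via $0<\la_*<\la_*+\delta<\la_0$ is a welcome detail the paper leaves implicit). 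The one genuine divergence is the term $\E[\{0\}][\wc{g}_{\!\!3}(\X_1)\ln_+^2\wc{g}_{\!\!3}(\X_1)]$: the paper keeps the logarithm, using the shift inequality $\ln_+^2(x+y)-\ln_+^2(x)\leq\ln_+^2(y+e)-\ln_+^2(e)$ to split $\ln_+^2\wc{g}_{\!\!3}(\x)$ over particles and then $\ln^2(e\cdot x)\leq Cx$ to land directly on $w_{0,\la_*}+w_{\delta,\la_*}$; you instead trade the log for a small power via $x\ln_+^2 x\leq C_\eps x^{1+\eps}$, use subadditivity of $x\mapsto x^\eps$ to get $\wc{e}_{\la_*}^{1+\eps}\leq\wc{e}_{\la_*}\wc{e}_{\eps\la_*}$, and interpolate $w_{\eps\la_*,\la_*}(0,1)\leq\max\{w_{0,\la_*}(0,1),w_{\delta,\la_*}(0,1)\}$ by convexity in the first index. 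Both routes consume exactly the same two $w$-moments; yours is slightly slicker to state, the paper's avoids introducing the auxiliary exponent $\eps$ and the convexity step. Every inequality you invoke checks out (in particular $\eps=\min\{1,\delta/\la_*\}$ does place $\eps\la_*$ in $(0,\delta]$, and $w_{\mu,\la_*}(0,1)=\E[\{0\}][\sum_{y,z}e^{-\mu y-\la_* z}]$ is indeed convex in $\mu$), so there is no gap.
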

\begin{proof}
	For simplicity we denote (possibly) different constants by the same letter $C>0$.
	By Lemma~\ref{lem:Laplace_is_time-multiplicative}, \eqref{assum:sufficient_for_H1_and_H2} implies
	\[
		v_{0}(0,s) + v_{\delta+\la_*}(0,s) + w_{0,0}(0,s) + w_{0,\la_*}(0,s) + w_{\delta,\la_*}(0,s)<\infty,\qquad s\in[0,1].
	\]
	There exists $C>0$ such that
	\[
		g_2(y) \leq C(1 + e^{-(\la_*+\delta)y}).
	\]
	Therefore,
	\[
		\E[\{0\}][\wc{g}_{\!\!2}(\X_1)] \leq C (v_0(0,1) +  v_{\delta+\la_*}(0,1)) <\infty. 
	\]
	Next, $c_* = \frac{\partial}{\partial \la} \psi(\la_*)$ implies $\frac{\partial}{\partial_\la} v_{\la_*}(0,1) = c_* v_{\la_*}(0,1)$ which yields $\E[\{0\}][\wc{g}_{\!\!1}(\X_1)] = 0$.
	Hence \eqref{assum:H1} holds.
	
	Since, for $x\geq e$, $\partial_x \ln_+^2(x)$ is a non-negative decreasing function, then for $x\geq e$, $y\geq0$,
	\[
		\ln_+^2(x+y) - \ln_+^2(x) = \int[x][x+y] \partial_x \ln_+^2(z)dz \leq \int[e][y+e] \partial_x \ln_+^2(e) = \ln_+^2(y+e) - \ln_+^2(e). 
	\]
	Hence, for $\x\in\bR\backslash\{\varnothing\}$, 
	\[
		\ln_+^2 \wc{g\!}_3(\x) \leq \ln^2 \sum_{y\in\x} \max\{e,g_3(y)\} \leq \sum_{y\in\x} \ln^2 \max\{e,g_3(y)\} + \ln^2(e\cdot \wc{1\!}(\x)).
	\]
	There exists $C>0$ such that $\ln^2(e\cdot x) \leq \frac{C}{2}x$, $x\geq1$, and
	\[
		\ln^2\max\{e,g_3(y)\} \leq \frac{C}{2} (1+e^{-\delta y}),\quad y\in\R.
	\]
	Then, for $\x\in\bR\backslash\{\varnothing\}$, 
	\begin{align*}
		\ln_+^2 \wc{g\!}_3(\x)  \leq C(\,\wc{1\!}(\x) + \wc{e}_{\!\delta}(\x)),
	\end{align*}
	and, finally,
	\[
		\E[\{0\}][\wc{g}_{\!\!3}(\X_1)\ln_+^2 \wc{g}_{\!\!3}(\X_1)]\leq C (w_{0,\la_*}(0,1)+w_{\delta,\la_*}(0,1)) <\infty.
	\]
	By the definition of $g_4$ there exists $C>0$ such that $g_4(y)\leq C$, $y\in\R$. 
	Hence,
  \[
		\E[\{0\}][\wc{g}_{\!\!4}(\X_1)\ln_+ \wc{g}_{\!\!4}(\X_1)] \leq C w_{0,0}(0,1)<\infty.
  \]
	As a result, \eqref{assum:H2} holds and the proof is fulfilled.
\end{proof}

\section{Proof of Theorem~\ref{thm:main}}
Let $u$ be the minimal non-negative solution to the S-equation \eqref{eq:S} with the initial condition  $u(x,0)=f(x) = \1_{\R_+}(x)$.
Such solution is given by Theorem~\ref{thm:BMP_and_PDE} and it satisfies \eqref{eq:minimal_sol_to_S_equation}.
Therefore,
\begin{align*}
	u(x,t) &= \E[\{0\}][\wh{f}(x+\X_t)] = \E[\{0\}][\forall y\in\X_t:\ x+y\geq 0] \\  
	&= \E[\{0\}][M_t:=\min\{y\in\R:\,y\in\X_t\} \geq -x] ,\quad x\in\R,\ t\geq0,
\end{align*}
where we remind the reader that non of the points in $\X$ equals $\infty$ by the item \ref{assum:inftyinfty} in Definition~\ref{defn:X0_pi_BMP}.
Hence, \eqref{eq:u_and_M} holds true. Then, \eqref{eq:left-most_limiting_law} is equivalent to \eqref{eq:stability}. Therefore, it is sufficient to prove the latter one.

Let us consider for $k\in\N$ the branching random walk on $\R$, c.f. Lemma~\ref{lem:from_BMP_to_BRW}, 
\[
	(\X_n(k)):= \{ \X_{\frac{n}{2^k}} \}_{n\in\N_0}.
\]
By Lemma~\ref{lem:Laplace_is_time-multiplicative}, the log-Laplace transform $\psi_k$ of $\X_1(k)$ satisfies
\[
	2^k \psi_k(\la) = \psi_1(\la) = \psi(\la).
\]
Therefore, \eqref{assum:supercritical} and \eqref{assum:c_*} for $\psi$ imply analogous assumptions for $\psi_k$, with the same $\la$ and $\la_*$, namely,
\[
 \psi_k(0)\in(0,\infty),\quad \psi_k(\la)<\infty,\quad \frac{\psi_k(\la_*)}{\la_*} = \inf_{\la>0} \frac{\psi_k(\la)}{\la} = \frac{c_*}{2^k}.
\]
The corresponding derivative martingale $(D_n(k))$ satisfies 
\[
	D_n(k)\to D_\infty(k),\ n\to\infty,\ a.s.
\]
Since $D_{2^kn}(k) = D_n(1)$, then a.s. $D_\infty(k)=D_\infty(1)=D_\infty$, $k\in\N$.
Denote
\[
	M_n(k) := \min\{x\in\R\,\vert\, x\in\X_n(k)\},\quad n\in\N_0.
\]
Then by Theorem~\ref{thm:lim_of_leftmost_of_BRW}, for any $k\in\N$ there exists $C_k$ such that 
\[
	\lim\limits_{n\to\infty} \E[\{0\}][M_n(k) + \frac{c_* }{2^k}n - \frac{3}{2\la_*}\ln n + C_k \geq x] = \E[\{0\}][e^{-e^{\la_* x}D_\infty}].
\]
Since for $n=m2^k$, $m,k\in\N$, the left hand side coincides with the one in Theorem~\ref{thm:lim_of_leftmost_of_BRW}, then $C_k = C_* + \frac{3}{2}k\ln2$. 
As a result, by Theorem~\ref{thm:BMP_and_PDE},
\begin{equation}\label{eq:descrete_k_limit}
	\lim_{n\to\infty} u(x + \theta\big(\frac{n}{2^k}\big), \frac{n}{2^k}) = \E[\{0\}][e^{-e^{-\la_* x}D_\infty}] := \phi(x), \quad x\in\R,\ k\in\N,
\end{equation}
where $u(x,t)$ is the minimal solution to the S-equation \eqref{eq:S} with the initial condition $u_0(x)=\1_{\R_+}(x)$, and 
\[
	\theta(t) := c_*t - \frac{3}{2\la_*}\ln t + C_*.
\]
Consider a uniformly continuous function  $f_h$, such that
\[
	f(x)  \leq f_h(x) \leq f(x+h), \quad x\in\R.
\]
Denote $u_h(x,t)=\vrt{\big(\T_t \wh{f_h}\big)}{\R}\!(x)$.
By Theorem~\ref{prop:comparison}, 
\[
	u(x, t) \leq u_h(x, t) \leq u(x + h, t), \quad x\in\R,\ t\geq0.
\]
By Proposition~\ref{prop:u_is_BUC}, $u_h$ is uniformly continuous in $(x,t)\in\R\times\R_+$. Hence, taking $k\to\infty$ in \eqref{eq:descrete_k_limit},
\[
	\phi(x) \leq \liminf_{t\to\infty} u_h(x+\theta(t),t) \leq \limsup_{t\to\infty} u_h(x+\theta(t),t) \leq \phi(x+h).
\]
On the other hand, $u_h(x-h, t) \leq u(x, t) \leq u_h(x, t)$, implies
\[
	\phi(x-h) \leq \liminf_{t\to\infty} u(x+\theta(t),t) \leq \limsup_{t\to\infty} u(x+\theta(t),t) \leq \phi(x+h).
\]
Since $\phi$ is continuous, then taking $h\to0_+$ we obtain \eqref{eq:stability}.

Let us now show that $\phi(x) =  \E[\{0\}][e^{-e^{-\la_* x}D_\infty}]$ is a monotone traveling wave profile with speed $c_*$ to the S-equation, namely that $(x,t)\to\phi(x-c_*t)$ satisfies \eqref{eq:S} and $\phi(+\infty)=1$, $\phi(-\infty)=\P[\{0\}][D_\infty=0]<1$. 
The letter two limits hold by \eqref{eq:traveling_wave}. 
Since \eqref{assum:supercritical} implies (see e.g. \cite{Shi2015}), 
\[
	\P[\{0\}][(X_n)\text{ does not extinct}]>0,
\]
then by \eqref{eq:derivative_martingale_is_positive} and \eqref{assum:supercritical}, $\P[\{0\}][D_\infty=0]<1$.

Monotonicity of $\phi$ is obvious. 
By \eqref{eq:stability}, \eqref{eq:minimal_sol_to_S_equation} and Definition~\ref{defn:spatially_homogeneous}, for $u(x,0)=f(x)=\1_{\R_+}(x)$, $t_0\geq0$,
\begin{align*}
	\phi(x) &= \lim_{t\to\infty} u(x+c_*(t+t_0) - \frac{3}{2\la_*} \ln (t+t_0) + C,t+t_0)\\ 
	&= \lim_{t\to\infty} \vrt{(\T_{t_0} S_{c_*t_0} \T_{t} \wh{f}\,)}{\R} (x+c_*t- \frac{3}{2\la_*} \ln t - \frac{3}{2\la_*} \ln(1+\frac{t_0}{t})+ C) \\
	&= \lim_{t\to\infty} \vrt{(\T_{t_0} S_{c_*t_0} \T_{t} \wh{f}\,)}{\R} (x+c_*t- \frac{3}{2\la_*} \ln t + C)  = \vrt{(\T_{t_0} S_{c_*t_0} \wh{\phi})}{\R}(x). 
\end{align*}

This finishes the proof of Theorem~\ref{thm:main}.

\appendix
\section{Proof of Proposition~\ref{prop:nl+logistic}}
\begin{proof}
	By construction, the process $\X$ is spatially homogeneous, Assumption~\ref{assumm:doesnt_explode} holds true, and $\X_1$ is non-lattice. 
	The assumption \eqref{assum:supercritical} holds if and only if there exists $\la>0$, such that $(\L a)(\la)<\infty$.
	By \eqref{eq:safficient_for_pure_jump_logistic}, 
	\[
		\frac{(\L a)(\la)}{\la} \geq \frac{\delta I e^{\la l}}{l} \to \infty,\ \la\to\infty,\quad \frac{(\L a)(\la)}{\la} \sim \frac{1}{\la} \to\infty,\ \la\to 0_+. 
	\]
	Note that $\la_0>0$ defined by \eqref{eq:la_0} is the abscissa of the Laplace transform $\L a$ of $a$.
The function $\frac{(\L a)(\la)}{\la}$ is convex (and hence continuous) on $(0,\la_0)$, since 
	\[
		\frac{\partial^2}{\partial \la^2} \Big(\frac{\psi(\la)}{\la} \Big) = \frac{\int[\R] ((\la x+1)^2+1) e^{-\la x} a(x) dx}{\la^3}>0.
	\]
	Therefore, if the infimum in \eqref{assum:c_*} is not reached at $\la_*=\la_0$, then \eqref{assum:c_*} holds true.
	Next, for $\la\in(0,\la_0)$,
	\[	
		\frac{\partial}{\partial \la} \Big(\frac{\psi(\la)}{\la} \Big) = - \frac{\int[\R](\la x+1)e^{-\la x} a(x)dx}{\la^2} =0 \quad \Leftrightarrow \quad \frac{\partial \psi}{\partial \la}(\la) = \frac{\psi(\la)}{\la}. 
	\]
	Therefore, $c_* =\frac{\partial}{\partial \la} \psi(\la_*)$.
		Next, for $\la,\mu\geq0$, $\la+\mu<\la_0$, 
	\[
		w_{\la,\mu}(x,t) = e^{-(\la+\mu)x} \Big( e^{t(\L a)(\la+\mu)} + \int[0][t] e^{(t-s)(\L a)(\la+\mu)} e^{s(\L a)(\la)} e^{s(\L a)(\mu)}\Big)<\infty,
	\]
	and \eqref{assum:sufficient_for_H1_and_H2} is satisfied. 
	The proof is fulfilled.
\end{proof}


\begin{thebibliography}{9} 
	\bibitem{Aid2013}
		E.~A\"{i}d\'{e}kon. 
		Convergence in law of the minimum of a branching random walk. 
		{\em Ann. Probab.} 41, 1362--1426, 2013.

	\bibitem{ABBS2013}
		E.~A\"{i}d\'{e}kon, J.~Berestycki, \'{E}.~Brunet, and Z.~Shi. 
		Branching Brownian motion seen from its tip. 
		{\em Probability Theory and Related Fields,} 157(1-2), 405--451, 2013. 

	\bibitem{AN1972}
  K.B.~Athreya and P.E.~Ney.
	\textit{Branching Process.}
	{\em Grundlehren der Mathematischen Wissenschaften Series, Springer-Verlag}, 1972.

	\bibitem{Bru2016}
	\'{E}.~Brunet. 
	Some aspects of the Fisher-KPP equation and the branching Brownian motion.
	{\em Doctoral dissertation, UPMC}, 2016. 

	\bibitem{BBD2017}
		J.~Berestycki, \'{E}~Brunet, B.~Derrida.
		Exact solution and precise asymptotics of a Fisher–KPP type front. 
		{\em Journal of Physics A: Mathematical and Theoretical,} 51(3), 035204, 2017. 

	
  \bibitem{Bra1983}
		M.D.~Bramson. Convergence of solutions of the Kolmogorov equation to travelling waves. 
		{\em Mem. Amer. Math. Soc.} 44, no.~285, 1983.

	\bibitem{BD2015}
		\'{E}~Brunet, B.~Derrida. 
		An exactly solvable travelling wave equation in the Fisher–KPP class. 
		{\em Journal of Statistical Physics,} 161(4), 801--820, 2015. 

	\bibitem{BDZ2016}
		M.~Bramson, J.~Ding, O.~Zeitouni. 
		Convergence in law of the maximum of nonlattice branching random walk.
		{\em In Annales de l'Institut Henri Poincar\'{e}, Probabilit\'{e}s et Statistiques} 52 (4), 1897--1924, 2016. 
	\bibitem{BD2011}
		\'{E}.~Brunet, B.~Derrida. 
		A branching random walk seen from the tip.
		{\em Journal of Statistical Physics}, 143(3), 420--446, 2011.

\bibitem{BP1997}
  	B.~Bolker and S.~W. Pacala.
    \href{http://dx.doi.org/10.1006/tpbi.1997.1331}{Using moment equations to understand stochastically driven spatial pattern formation in ecological systems}.
    {\em Theor. Popul. Biol.}, 52(3):179--197, 1997.

	\bibitem{Che2015}
		X.~Chen. 
		A necessary and sufficient condition for the nontrivial limit of the derivative martingale in a branching random walk. 
		{\em Advances in Applied Probability}, 47(3), 741--760, 2015. 

  \bibitem{CD2005}
  	J.~Coville and L.~Dupaigne.
  	Propagation speed of travelling fronts in non local reaction-diffusion equations.
  	{\em Nonlinear Anal.}, 60 (5): 797--819, 2005.

  \bibitem{CDM2008}
    J.~Coville, J.~D{\'a}vila, and S.~Mart{\'{\i}}nez.  
  	Nonlocal anisotropic dispersal with monostable nonlinearity.
    {\em J. Differential Equations}, 244 (12): 3080--3118, 2008.

\bibitem{Dur1988}
  	R.~Durrett.
    Crabgrass, measles and gypsy moths: An introduction to modern probability.
    {\em Bulletin (New Series) of the American Mathematical Society}, 18(2):117--143, 1988.

	\bibitem{ES2000}
		U.~Ebert, W.~Saarloos. 
		Front propagation into unstable states: universal algebraic convergence towards uniformly translating pulled fronts.
		{\em Physica D: Nonlinear Phenomena,} 146(1--4), 1--99, 2000.

	\bibitem{Fis1937}
		R.~Fisher.
   The advance of advantageous genes.
   {\em Ann. Eugenics}, 7: 335--369, 1937.

\bibitem{FKK2011}
	  D.~Finkelshtein, Y.~Kondratiev, and O.~Kutoviy.  
		\href{http://dx.doi.org/doi:10.1016/j.jfa.2011.11.005}{Semigroup approach to birth-and-death stochastic dynamics in~continuum}.
    {\em J.~Funct. Anal.}, 262(3):1274--1308, 2012.

	\bibitem{FKT2018i}
		D.~Finkelshtein, Y.~Kondratiev, P.~Tkachov. 
		\href{https://arxiv.org/abs/1804.10258}{Doubly nonlocal Fisher-KPP equation: Existence and properties of traveling waves.}
		Preprint, arXiv/1804.10258.

	\bibitem{FKT2018ii}
		D.~Finkelshtein, Y.~Kondratiev, P.~Tkachov. 
		\href{https://arxiv.org/abs/1804.10259}{Doubly nonlocal Fisher-KPP equation: Speeds and uniqueness of traveling waves.}
		Preprint, arXiv/1804.10259.

	\bibitem{FM2004}
	  N.~Fournier and S.~M{\'e}l{\'e}ard.
    \href{http://dx.doi.org/10.1214/105051604000000882}{A microscopic probabilistic description of a locally regulated population and macroscopic approximations}.
    {\em The Annals of Applied Probability}, 1(4):1880--1919, 2004.

	\bibitem{HNRR2013}
		F.~Hamel, J.~Nolen, J.M.~Roquejoffre, L.~Ryzhik. 
    A short proof of the logarithmic Bramson correction in Fisher-KPP equations. 
		{\em Netw. Heterog. Media,} 8, 275--279, 2013. 

	\bibitem{INW1968i}
		N.~Ikeda, M.~Nagasawa, S.~Watanabe.
		\href{https://projecteuclid.org/euclid.kjm/1250524137}{Branching Markov processes I.}
		{\em J. Math. Kyoto Univ.} 8 (2), 233--278, 1968.

	\bibitem{INW1968ii}
		N.~Ikeda, M.~Nagasawa, S.~Watanabe.
		\href{ https://projecteuclid.org/euclid.kjm/1250524059}{Branching Markov processes II.}
		{\em J. Math. Kyoto Univ.} 8 (3), 365--410, 1968.

	\bibitem{INW1969iii}
		N.~Ikeda, M.~Nagasawa, S.~Watanabe.
		\href{https://projecteuclid.org/euclid.kjm/1250524013}{Branching Markov processes III.}
		{\em J. Math. Kyoto Univ.} 9 (1), 95--160, 1969.

  \bibitem{KPP1937}
  A.~N. Kolmogorov, I.~G. Petrovsky, and N.~S. Piskunov.
   \'{E}tude de l'\'{e}quation de la diffusion avec croissance de la quantit\'{e} de mati\`{e}re et son application \`{a} un probl\`{e}me biologique.
   {\em Bull. Univ. \'{E}tat Moscou S\'{e}r. Inter. A}, 1: 1--26, 1937.

	\bibitem{Lau1985}
		K.-S. Lau. 
		On the nonlinear diffusion equation of Kolmogorov, Petrovskii and Piskunov. 
		{\em J. Diff.  Eqs.} 59, 44--70, 1985.


	\bibitem{LS1987}
  	S.P.~Lalley, T.~Sellke. 
		A conditional limit theorem for the frontier of a branching Brownian motion.
		{\em Ann. Probab.} 15, 1052--1061, 1987.

	\bibitem{McK1975} 
		H.P.~McKean. 
		Application of Brownian motion to the equation of Kolmogorov-Petrovskii-Piskunov. 
		{\em Comm. Pure Appl. Math.} 28, 323--331, Erratum: 29, 553--554.(1975)

\bibitem{Mol1972a}
		D.~Mollison.
    Possible velocities for a simple epidemic.
    {\em Advances in Appl. Probability}, 4:233--257, 1972.

	\bibitem{Mol1972}
  	D.~Mollison.
    The rate of spatial propagation of simple epidemics.
    In {\em Proceedings of the {Sixth} {Berkeley} {Symposium} on {Mathematical} {Statistics} and {Probability} ({Univ}. {California}, {Berkeley}, {Calif}., 1970/1971), {Vol}. {III}: {Probability} theory}, pages 579--614. Univ. California Press, Berkeley, Calif., 1972.

	\bibitem{NRR2017}
		J.~Nolen, J.M.~Roquejoffre, L.~Ryzhik. 
		Convergence to a single wave in the Fisher-KPP equation.
		{\em Chinese Annals of Mathematics, Series B,} 38(2), 629--646, 2017.

	\bibitem{Sat1999}
		K.~Sato
		\textit{L\'{e}vy processes and infinitely divisible distributions.}
		{\em Cambridge university press,} 1999. 

	\bibitem{Sev1951}
	B.A.~Sevast'yanov, The theory of branching random processes. (Russian) 
	{\em Uspehi Matem. Nauk (N.S.)} 6(46), 47--99, 1951.

	\bibitem{Sha1988}
		M.~Sharpe. 
		\textit{General theory of Markov processes.}
		{\em Academic press}, 133, 1988.

	\bibitem{Shi2015}
		Z.~Shi.
		\textit{Branching Random Walks}.
		\'{E}cole d'\'{E}t\'{e} de Probabilit\'{e}s de Saint-Flour XLII – 2012.
		{\em Springer}, X, 133, 2015.

	\bibitem{Uch1978}
		K.~Uchiyama, The behavior of solutions of some non-linear diffusion equations for large time. 
		{\em Journal of Mathematics of Kyoto University,} 18(3), 453--508, 1978. 

\end{thebibliography}
\end{document}